\newtheorem{assumption}{Assumption}
\newtheorem{lemma}{Lemma}
\newtheorem{theorem}{Theorem}
\newtheorem{corollary}{Corollary}
\newcommand{\N}{{\mathbb N}}
\newcommand{\Z}{{\mathbb Z}}
\newcommand{\R}{{\mathbb R}}
\newcommand{\C}{{\mathbb C}}
\newcommand{\cercle}{{\mathbb S}^1}
\newcommand{\bqs}{\begin{equation*}}
\newcommand{\eqs}{\end{equation*}}
\newcommand{\bqq}{\begin{equation}}
\newcommand{\eqq}{\end{equation}}
\renewcommand{\Re}{\mathrm{Re}}
\renewcommand{\Im}{\mathrm{Im}}
\title{The local limit theorem for complex valued sequences: \\ 
the parabolic case}
\author{Jean-Fran\c{c}ois {\sc Coulombel} \& Gr\'egory {\sc Faye}\thanks{Institut de Math\'ematiques de Toulouse - UMR 5219, Universit\'e de Toulouse ; CNRS, Universit\'e Paul Sabatier, 118 route de Narbonne, 31062 Toulouse Cedex 9 , France. Research of G.F. acknowledges support from the ANR via the project Indyana under grant agreement ANR- 21- CE40-0008 and from Labex CIMI under grant agreement ANR-11-LABX-0040. Emails: {\tt jean-francois.coulombel@math.univ-toulouse.fr}, {\tt gregory.faye@math.univ-toulouse.fr}}}
\date{\today}
\begin{document}

\maketitle

\begin{abstract}
We give a complete expansion, at any accuracy order, for the iterated convolution of a complex valued integrable sequence in one space dimension. The remainders are estimated sharply with gene\-ralized Gaussian bounds. The result applies in probability theory for random walks as well as in numerical analysis for studying the large time behavior of numerical schemes.
\end{abstract}

\section{Introduction}
\label{section1}

The local limit theorem in probability theory \cite[Chapter VII]{Petrov} gives an asymptotic expansion of the probability:
$$
\mathbb{P}(X_1+\cdots+X_n=j)
$$
where $X_1,...,X_n,...$ are independent, identically distributed, random variables with values in $\Z$. Some versions of the local limit theorem even consider non identically distributed random variables. Usually, the expansion is understood in the sense that the discrete time $n$ becomes large, and one wishes to obtain remainders that are, at least, uniform with respect to the position $j \in \Z$. The terms in the expansion correspond to increasing powers of $n^{-1/2}$, and the leading term in the expansion is a suitably scaled Gaussian function, as can be expected from  the  central  limit  theorem. As a matter of fact, summing with respect to $j$ the leading term in the expansion, we expect to recover in the expression:
$$
\mathbb{P}(X_1+\cdots+X_n \le J) \, = \, \sum_{j=-\infty}^J \, \mathbb{P}(X_1+\cdots+X_n=j)
$$
an approximation by some kind of Riemann sum of the cumulative distribution function of a normal distribution (evaluated at some well-chosen point).

When all random variables  are assumed to be independent and identically distributed, the probability $\mathbb{P}(X_1+\cdots+X_n=j)$ corresponds to the value at the index $j$ of the iterated convolution $(n-1)$-times of the sequence $(\mathbb{P}(X_1=\ell))_{\ell \in \Z}$ with itself. Following, among others, the series of works \cite{Thomee,hedstrom,Despres1,Diaconis-SaloffCoste,RSC1,RSC2,CF1,Coeuret,Randles}, we aim here at giving a complete asymptotic expansion of such iterated convolutions without making any positivity assumption, that is by going beyond the probabilistic framework. The article \cite{Diaconis-SaloffCoste} gives a large overview of examples where this issue is meaningful. As explained and evidenced in \cite{RSC1}, dropping the positivity assumption yields a much larger variety of possible behaviors that correspond, in the language of partial differential equations, either to \emph{parabolic} or \emph{dispersive} behaviors. The present work focuses on the parabolic case, which is the \emph{stable} case in \cite{Thomee} (the case in which the iterated convolutions will be bounded in the $\ell^1$ norm). The dispersive case will be dealt with in a subsequent work.

The results in the above mentioned references contained either technical restrictions on the Fourier transform\footnote{This function is referred to as the \emph{characteristic function} in probability theory, see \cite{Petrov}.} of the considered sequence or did not provide sharp enough estimates for the remainders so that they could be used, e.g., to give error estimates for numerical analysis purposes (see for instance Corollary \ref{coro1} below). In this article, we drop all previous technical restrictions and give an asymptotic expansion up to any order with a sharp, generalized Gaussian estimate for the remainders. The latter estimates yield optimal large time decay estimates for numerical schemes. An example of a third order finite difference approximation of the transport equation is detailed in order to illustrate our main result.

\paragraph{Notation.} In all this article, we let $B(z,\delta)$ denote the open disk in the complex plane $\{ w \in \C \, | \, |z-w|<\delta \}$ that is centered at $z$ and has radius $\delta>0$. We also let $\mathcal{C}(z,\delta)$ denote the open square $\{ w \in \C \, | \, \max \, (|\Re \, (z-w)|,|\Im \, (z-w)|<\delta \}$ in the complex plane that is centered at $z$ and whose side length equals $2 \, \delta$. Eventually we let $\cercle$ denote the unit circle in $\C$. Any other notation is meant to be self-explanatory or is introduced in the core of the article.

\section{Assumptions and main result}
\label{section2}

For any two complex valued sequences $\boldsymbol{a}=(a_\ell)_{\ell \in \Z}$ and $\boldsymbol{b}=(b_\ell)_{\ell \in \Z}$ such that the quantity below makes sense, the convolution $\boldsymbol{a} \star \boldsymbol{b}$ of $\boldsymbol{a}$ and $\boldsymbol{b}$ is defined as:
$$
\boldsymbol{a} \star \boldsymbol{b} := \left( \sum_{\ell' \in \Z} a_{\ell-\ell'} \, b_{\ell'} \right)_{\ell \in \Z} \, .
$$
For instance, the celebrated Young's inequality shows that, for $\boldsymbol{a}$ and $\boldsymbol{b}$ in the space of complex valued integrable sequences $\ell^1(\Z;\C)$, the convolution $\boldsymbol{a} \star \boldsymbol{b}$ is well defined and also belongs to $\ell^1(\Z;\C)$, which endows this space with a Banach algebra structure. The goal of this article is to study the geometric sequences (at least, some of them) in this algebra. In all this article, we consider a fixed complex valued sequence $\boldsymbol{a}=(a_\ell)_{\ell \in \Z}$ and we make the following assumption.

\begin{assumption}
\label{ass:1}
The sequence $\boldsymbol{a}=(a_\ell)_{\ell \in \Z}$ belongs to $\ell^1(\Z;\C)$ and its associated Fourier series:
$$
F_{\boldsymbol{a}} \quad : \quad \zeta \in \C \longmapsto \sum_{\ell \in \Z} \, a_\ell \, \zeta^\ell \, ,
$$
defines a holomorphic function on an annulus $\{ \zeta \in \C \, | \, 1-\varepsilon < |\zeta| <1+\varepsilon \}$ for some $\varepsilon>0$. Furthermore, there holds:
$$
\sup_{\kappa \in \cercle} \, |F_{\boldsymbol{a}}(\kappa)| \, = \, 1 \, .
$$
\end{assumption}

The latter normalization for the maximum of $|F_{\boldsymbol{a}}|$ on the unit circle is made in order to avoid introducing additional terms in the main result below. In numerical analysis, this normalization corresponds to the von Neumann stability condition \cite{LaxRichtmyer}. Fixing the maximum to $1$ can always be achieved up to multiplying the considered sequence ${\boldsymbol{a}}$ by some positive number. Thanks to Cauchy's formula \cite{rudin}, the holomorphy of $F_{\boldsymbol{a}}$ on an annulus that contains the unit circle $\cercle$ is equivalent to the existence of a positive constant $c$ such that:
\begin{equation}
\label{borneexp}
\sup_{\ell \in \Z} \, {\rm e}^{c \, |\ell|} \, |a_\ell| <+\infty \, .
\end{equation}
We now recall the following alternative that has already been proved in our former work \cite{CF1} (see also \cite[page 98]{BTW}):

\begin{lemma}[Lemma A.1 in \cite{CF1}]
\label{lem:comportementF}
Let the sequence ${\boldsymbol{a}}$ satisfy Assumption \ref{ass:1}. Then one of the following is satisfied:
\begin{itemize}
 \item $F_{\boldsymbol{a}}(\kappa)$ has modulus $1$ for any $\kappa \in \cercle$ (e.g., $F_{\boldsymbol{a}}$ is a Blaschke product \cite{rudin}),
 \item there exists a finite set of pairwise distinct points $\{ \underline{\kappa}_1,\dots,\underline{\kappa}_K \}$, $K \ge 1$, in $\cercle$ such that  $F_{\boldsymbol{a}}(\underline{\kappa}_k)$ has modulus $1$ for any $k \in \{ 1,\dots,K \}$ and:
$$
\forall \, \kappa \in \cercle \setminus \big\{ \underline{\kappa}_1,\dots,\underline{\kappa}_K \big\} \, ,\quad \big| F_{\boldsymbol{a}}(\kappa) \big| \, < \, 1 \, .
$$ 
\end{itemize}
\end{lemma}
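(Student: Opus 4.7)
The idea is to recast the condition $|F_{\boldsymbol{a}}(\kappa)|=1$ on the unit circle as the vanishing of a single holomorphic function on an annulus, and then to apply the identity principle.

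First I would introduce the companion Laurent series
\[
F_{\boldsymbol{a}}^*(\kappa) \, := \, \sum_{\ell \in \Z} \, \overline{a_\ell} \, \kappa^{-\ell} \, ,
\]
which, by the exponential bound (\ref{borneexp}), defines a holomorphic function on the same annulus $\{ 1-\varepsilon < |\zeta| < 1+\varepsilon \}$ as $F_{\boldsymbol{a}}$. On $\cercle$ the identity $\overline{\kappa}=1/\kappa$ yields $\overline{F_{\boldsymbol{a}}(\kappa)} = F_{\boldsymbol{a}}^*(\kappa)$, hence the product
\[
H(\kappa) \, := \, F_{\boldsymbol{a}}(\kappa) \, F_{\boldsymbol{a}}^*(\kappa) \, - \, 1
\]
is holomorphic on the annulus and satisfies $H(\kappa) = |F_{\boldsymbol{a}}(\kappa)|^2 - 1$ for every $\kappa \in \cercle$.

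Next I would split cases on the zero set $Z := \{ \kappa \in \cercle \, : \, H(\kappa)=0 \} = \{ \kappa \in \cercle \, : \, |F_{\boldsymbol{a}}(\kappa)|=1 \}$. If $Z$ is infinite, compactness of $\cercle$ yields an accumulation point inside the annulus where $H$ is holomorphic; the identity theorem then forces $H \equiv 0$ on the entire annulus, which amounts to $|F_{\boldsymbol{a}}| \equiv 1$ on $\cercle$, that is, the first alternative. Otherwise $Z$ is finite: labelling its elements $\underline{\kappa}_1,\dots,\underline{\kappa}_K$ and combining with the normalization $\sup_{\cercle}|F_{\boldsymbol{a}}|=1$, one obtains $|F_{\boldsymbol{a}}(\kappa)|<1$ on $\cercle \setminus Z$, which is the second alternative. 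Note that $Z$ is nonempty in this case because continuity of $|F_{\boldsymbol{a}}|$ on the compact set $\cercle$ forces the supremum to be attained, so $K \geq 1$ as stated.

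I do not anticipate any serious obstacle here. The argument rests on two ingredients already packaged in Assumption \ref{ass:1}: the exponential decay (\ref{borneexp}) of $(a_\ell)_{\ell \in \Z}$, which lets $F_{\boldsymbol{a}}^*$ extend holomorphically across $\cercle$, and the isolated-zeros property of nonzero holomorphic functions. The only minor point worth recording is that the accumulation point produced in the infinite case indeed lies inside the domain of holomorphy of $H$, which is immediate since $\cercle$ itself is contained in the open annulus.
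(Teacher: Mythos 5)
Your argument is correct. The paper itself gives no proof of this lemma (it is imported verbatim from \cite{CF1}, Lemma A.1), but your route --- extending $\kappa\mapsto|F_{\boldsymbol{a}}(\kappa)|^2$ off the circle via the reflected series $F_{\boldsymbol{a}}^*(\kappa)=\sum_\ell \overline{a_\ell}\,\kappa^{-\ell}$ and applying the identity principle to $F_{\boldsymbol{a}}F_{\boldsymbol{a}}^*-1$ on the annulus, with compactness giving $K\ge 1$ --- is exactly the standard argument used there, and all the minor points (shrinking the annulus so $F_{\boldsymbol{a}}^*$ is defined on it, nonemptiness of the zero set because the supremum $1$ is attained) are handled.
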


We explore in this article the behavior of the iterated convolutions ${\boldsymbol{a}} \star \cdots \star {\boldsymbol{a}}$ when ${\boldsymbol{a}}$ satisfies Assumption \ref{ass:1} and also satisfies the second possibility in Lemma \ref{lem:comportementF}. We refer below to the $F_{\boldsymbol{a}}(\underline{\kappa}_k)$'s as to tangency points since these are the points where the curve\footnote{Because of Assumption \ref{ass:1}, this curve is located inside the closed unit disk.} $\{ F_{\boldsymbol{a}}(\kappa) \, | \, \kappa \in \cercle \}$ meets the unit circle $\cercle$. The reader interested in the first case of Lemma \ref{lem:comportementF} may consult \cite{hedstrom} and \cite[Theorem 3.1]{BTW}. We shall make the following crucial assumption in what follows.

\begin{assumption}
\label{ass:2}
The sequence ${\boldsymbol{a}}$ satisfies Assumption \ref{ass:1} and its Fourier series $F_{\boldsymbol{a}}$ satisfies the second possibility in Lemma \ref{lem:comportementF}. Moreover, at any point $\underline{\kappa}_k \in \cercle$, $k \in \{ 1,\dots,K \}$, where the modulus of $F_{\boldsymbol{a}}$ attains the value $1$, there exists a real number $\alpha_k$, a complex number $\beta_k$ with positive real part and a nonzero integer $\mu_k \in \N^*$ such that, as the complex number $\xi$ tends to zero, there holds:
\begin{equation}
\label{DLFa}
F_{\boldsymbol{a}} \Big( \underline{\kappa}_k \, {\rm e}^{\mathbf{i} \, \xi} \Big) \, = \, 
F_{\boldsymbol{a}}(\underline{\kappa}_k) \, \exp \Big( \mathbf{i} \, \alpha_k \, \xi 
-\beta_k \, \xi^{2\, \mu_k} +O(\xi^{2\, \mu_k+1}) \Big) \, .
\end{equation}
\end{assumption}

Let us fix some more notation. First, the iterated convolution ${\boldsymbol{a}}^{\star n}$ is defined by ${\boldsymbol{a}}^{\star 1}:={\boldsymbol{a}}$ and for any $n \in \N^*$, ${\boldsymbol{a}}^{\star (n+1)}:={\boldsymbol{a}}^{\star n} \star {\boldsymbol{a}}$, which corresponds to the geometric sequence associated with ${\boldsymbol{a}}$ in the algebra $\ell^1(\Z;\C)$. Then, following \cite{RSC1}, for any nonzero integer $\mu \in \N^*$ and for any complex number $\beta$ with positive real part, we introduce the function:
\begin{equation}
\label{defHmubeta}
H^\beta_{2 \, \mu} \, : \, x \in \R \longmapsto \dfrac{1}{2 \, \pi} \int_{\R} {\rm e}^{-\mathbf{i} \, x \, \theta} \, {\rm e}^{-\beta \, \theta^{2\, \mu}} \, {\rm d}\theta \, .
\end{equation}
These functions (referred to as \emph{attractors} in \cite{RSC1}) will play a major role in the asymptotic expansions of this article. Some of their basic properties are recalled later on. We now try, as much as possible, to stick to the notation in \cite{Petrov}. Using Assumptions \ref{ass:1} and \ref{ass:2}, we consider a point $\underline{\kappa}_k \in \cercle$ at which $F_{\boldsymbol{a}}$ has modulus $1$. Up to using the logarithm, for any sufficiently small $\xi \in \C$, we can write $F_{\boldsymbol{a}}(\underline{\kappa}_k \, {\rm e}^{\mathbf{i} \, \xi})$ as the convergent power series:
\begin{equation}
\label{DLk}
F_{\boldsymbol{a}} \Big( \underline{\kappa}_k \, {\rm e}^{\mathbf{i} \, \xi} \Big) \, = \, 
F_{\boldsymbol{a}}(\underline{\kappa}_k) \, \exp \left( \mathbf{i} \, \alpha_k \, \xi -\beta_k \, \xi^{2\, \mu_k} 
+\sum_{\nu \ge 2\, \mu_k+1} \dfrac{\gamma_{k,\nu}}{\nu \, !} \, (\mathbf{i} \, \xi)^\nu \right) \, .
\end{equation}
The coefficients $\gamma_{k,\nu}$ play the role of \emph{cumulants} in probability theory. Starting from the power series expansion \eqref{DLk}, we follow \cite{Petrov} and expand a power series in two variables $(Y,Z)$ as follows:
\begin{equation}
\label{defPknu}
\exp \left( \sum_{\nu \ge 1} \dfrac{\gamma_{k,2\, \mu_k+\nu}}{(2\, \mu_k+\nu) \, !} \, 
Y^{2\, \mu_k+\nu} \, Z^\nu \right) \, = \, 1+\sum_{m \ge 1} \, P_{k,m}(Y) \, Z^m \, ,
\end{equation}
where the $P_{k,m}$'s are polynomials with complex coefficients that depend on the cumulants $\gamma_{k,\nu}$ (see several formulas below based on the Fa\`a di Bruno formula \cite{Comtet}). With the above notation, our main result reads as follows.

\begin{theorem}
\label{thm1}
Let the sequence ${\boldsymbol{a}}$ satisfy Assumptions \ref{ass:1} and \ref{ass:2}. Then there exist an integer $L \in \N^*$ and some positive constant $c_0>0$ such that for any $n \in \N^*$ and $\ell \in \Z$ with $|\ell|>L \, n$, there holds:
\begin{equation}
\label{estim1}
\left| {\boldsymbol{a}}^{\star n}_\ell \right| \le \exp (-c_0 \, n -c_0 \, |\ell|) \, .
\end{equation}
Moreover, for any integer $M \in \N$, there exist some positive constants $C_M$ and $c_M$ (that depend on $M$ and ${\boldsymbol{a}}$) such that the following holds: for any $n \in \N^*$ and $\ell \in \Z$ with $|\ell| \le L \, n$, there holds:
\begin{multline}
\label{estim}
\left| {\boldsymbol{a}}^{\star n}_\ell -\sum_{k=1}^K \dfrac{\underline{\kappa}_k^{-\ell} F_{\boldsymbol{a}}(\underline{\kappa}_k)^n}{n^{1/(2\, \mu_k)}} \, H^{\beta_k}_{2 \, \mu_k} \left( \dfrac{\ell-\alpha_k n}{n^{1/(2\, \mu_k)}} \right)
-\sum_{k=1}^K \sum_{m=1}^M \dfrac{\underline{\kappa}_k^{-\ell} F_{\boldsymbol{a}}(\underline{\kappa}_k)^n}{n^{(m+1)/(2\, \mu_k)}} \, \Big( P_{k,m} (-{\rm d}/{\rm d}x) H^{\beta_k}_{2 \, \mu_k} \Big) \left( \dfrac{\ell-\alpha_k n}{n^{1/(2\, \mu_k)}} \right) \right| \\
\le C_M \, \sum_{k=1}^K 
\dfrac{1}{n^{(M+2)/(2\, \mu_k)}} \, \exp \left( -c_M \, \left( \dfrac{|\ell-\alpha_k \, n|}{n^{1/(2\, \mu_k)}} \right)^{\frac{2\, \mu_k}{2\, \mu_k-1}} \right) \, ,
\end{multline}
where the polynomials $P_{k,m}$ are defined in \eqref{defPknu}.
\end{theorem}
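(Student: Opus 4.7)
The starting point is the Fourier/Cauchy representation
$$
\boldsymbol{a}^{\star n}_\ell \,=\, \dfrac{1}{2\, \pi} \int_{-\pi}^{\pi} F_{\boldsymbol{a}}({\rm e}^{\mathbf{i}\, \theta})^n \, {\rm e}^{-\mathbf{i}\, \ell\, \theta} \, {\rm d}\theta,
$$
which I split into $K$ near-tangency pieces supported on arcs $\{ \underline{\kappa}_k\, {\rm e}^{\mathbf{i}\, \xi} \, : \, |\xi| < \delta \}$, $\delta$ small enough that the arcs are pairwise disjoint, plus an off-tangency remainder. By Lemma \ref{lem:comportementF} and compactness, on the complement one has $|F_{\boldsymbol{a}}| \le \rho$ for some $\rho < 1$, so the off-tangency piece is bounded in modulus by $\rho^n$ and is absorbed by the right-hand sides of both \eqref{estim1} and \eqref{estim}. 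The rest of the argument focuses on the $K$ near-tangency integrals.

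For the outer estimate \eqref{estim1} I exploit holomorphy of $F_{\boldsymbol{a}}$ on the annulus $1-\eps < |\kappa| < 1+\eps$: Cauchy's theorem yields $\boldsymbol{a}^{\star n}_\ell = (2\, \pi\, \mathbf{i})^{-1} \oint_{|\kappa|=r} F_{\boldsymbol{a}}(\kappa)^n\, \kappa^{-\ell-1}\, {\rm d}\kappa$ for any $r \in (1-\eps,1+\eps)$. Choosing $r = {\rm e}^{-\eta\, \mathrm{sgn}(\ell)}$ with $\eta > 0$ small produces a factor $r^{-\ell} = {\rm e}^{-\eta\, |\ell|}$, while $\sup_{|\kappa|=r} |F_{\boldsymbol{a}}(\kappa)|$ is bounded by some $A$ independent of $n$. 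Picking $L > (\log A)/\eta$ ensures that, under $|\ell| > L\, n$, one has $A^n\, {\rm e}^{-\eta\, |\ell|} \le {\rm e}^{-c_0\, n - c_0\, |\ell|}$ for some $c_0>0$, which is \eqref{estim1}.

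For the main expansion in the range $|\ell| \le L\, n$, near each $\underline{\kappa}_k$ I write $\kappa = \underline{\kappa}_k\, {\rm e}^{\mathbf{i}\, \xi}$, insert \eqref{DLk}, and rescale via $\theta := n^{1/(2\, \mu_k)}\, \xi$ and $x := (\ell - \alpha_k\, n)/n^{1/(2\, \mu_k)}$. The exponent becomes
$$
-\mathbf{i}\, x\, \theta \, -\, \beta_k\, \theta^{2\, \mu_k} \, +\, \sum_{\nu \ge 1} \dfrac{\gamma_{k,2\, \mu_k+\nu}}{(2\, \mu_k+\nu)\, !}\, (\mathbf{i}\, \theta)^{2\, \mu_k+\nu}\, n^{-\nu/(2\, \mu_k)}.
$$
Applying \eqref{defPknu} with $Y=\mathbf{i}\, \theta$, $Z=n^{-1/(2\, \mu_k)}$ and Taylor expanding the second exponential through order $M$, the $k$-th near-tangency integral equals $\underline{\kappa}_k^{-\ell}\, F_{\boldsymbol{a}}(\underline{\kappa}_k)^n$ times
$$
\dfrac{1}{2\, \pi\, n^{1/(2\, \mu_k)}} \sum_{m=0}^{M} n^{-m/(2\, \mu_k)} \int {\rm e}^{-\mathbf{i}\, x\, \theta - \beta_k\, \theta^{2\, \mu_k}}\, P_{k,m}(\mathbf{i}\, \theta)\, {\rm d}\theta \,+\, R_{k,M}(n,\ell),
$$
with the convention $P_{k,0}:=1$. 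The identity $(-{\rm d}/{\rm d}x)^j\, H^{\beta_k}_{2\, \mu_k}(x) = (2\, \pi)^{-1} \int {\rm e}^{-\mathbf{i}\, x\, \theta - \beta_k\, \theta^{2\, \mu_k}}\, (\mathbf{i}\, \theta)^j\, {\rm d}\theta$ identifies the displayed sum with the explicit terms appearing in \eqref{estim}, so everything comes down to estimating the remainder $R_{k,M}$.

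The core technical obstacle is the sharp generalized Gaussian bound for $R_{k,M}$ (and, more basically, for $H^{\beta_k}_{2\, \mu_k}$ and its derivatives). A naive pointwise estimate of the integrand only yields $|R_{k,M}|=O(n^{-(M+2)/(2\, \mu_k)})$ with no decay in $x$. To recover the factor $\exp(-c_M\, |x|^{2\, \mu_k/(2\, \mu_k-1)})$ I use steepest-descent contour deformation: shifting $\theta \mapsto \theta + \mathbf{i}\, \eta(x)$ with $\eta(x)$ of order $|x|^{1/(2\, \mu_k-1)}\, \mathrm{sgn}(x)$ makes $\Re(\beta_k\, \theta^{2\, \mu_k} + \mathbf{i}\, x\, \theta)$ acquire a contribution comparable to $|x|^{2\, \mu_k/(2\, \mu_k-1)}$. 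Holomorphy of $F_{\boldsymbol{a}}$ on the annulus together with the precise form of Assumption \ref{ass:2} legitimates the deformation throughout the range $|x| \lesssim n^{1-1/(2\, \mu_k)}$ (equivalent to $|\ell-\alpha_k\, n| \le L\, n$), and, on the shifted contour, keeps the higher-order cumulant corrections controlled as small perturbations of the Gaussian kernel. Assembling the outer bound, the exponentially small off-tangency contribution, and these sharp remainder estimates delivers \eqref{estim}.
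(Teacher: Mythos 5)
Your proposal follows essentially the same route as the paper's proof: the far-field bound via a Cauchy contour shift in $\kappa$, the splitting into arcs around the tangency points plus an exponentially small off-tangency piece, the rescaling and Taylor expansion in powers of $n^{-1/(2\,\mu_k)}$ producing the polynomials $P_{k,m}$ and the terms $P_{k,m}(-{\rm d}/{\rm d}x)H^{\beta_k}_{2\mu_k}$, and, crucially, the generalized Gaussian bound on the Taylor remainder obtained by deforming the $\theta$-contour by an imaginary shift of size $|x|^{1/(2\,\mu_k-1)}$ (which the paper caps at $\delta\, n^{1/(2\,\mu_k)}$ and whose two regimes it treats separately). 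The only slip is the sign of the shift: to make $\Re\,(\mathbf{i}\,x\,\theta)$ gain the positive contribution $\sim |x|^{2\,\mu_k/(2\,\mu_k-1)}$ you must move the contour opposite to $\mathrm{sgn}(x)$, as the paper does by integrating along $\theta-\mathbf{i}\,\Xi$ when $x_k\ge 0$.
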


Let us make several comments on Theorem \ref{thm1}. In probability theory, the sequence ${\boldsymbol{a}}$ is given by $a_\ell=\mathbb{P}(X_1=\ell)$. It contains only non-negative real numbers that sum to $1$. Assumption \ref{ass:1} is therefore satisfied if and only if the $a_\ell$'s satisfy a bound of the form \eqref{borneexp} for some constant $c>0$. In that case, the first scenario in Lemma \ref{lem:comportementF} is possible if and only if one single $a_\ell$ equals $1$ and all other are zero. This scenario corresponds to a deterministic random walk where, at each time step, one makes a translation of a fixed number $\ell_0$ on the grid $\Z$. Let us therefore assume that the sequence $a$ possesses at least two nonzero elements and that, for simplicity\footnote{Some rare cases yield $K=2$, take for instance $a_{-1}=a_1=1/2$.}, $F_{\boldsymbol{a}}$ has modulus $1$ only at $\kappa=1$ for $\kappa \in \cercle$. This corresponds to $K=1$ in the notation of Assumption \ref{ass:2}, with $\underline{\kappa}_1=1$ and $F_{\boldsymbol{a}}(\underline{\kappa}_1)=1$. Moreover, $\alpha_1$ is nothing but the mean of the random variable $X_1$, $\mu_1=1$ and $2 \, \beta_1=\sigma_1^2>0$ is the variance of $X_1$. From the definition \eqref{defHmubeta}, we compute:
$$
\forall \, x \in \R \, ,\quad H_2^{\beta_1}(x) 
=\dfrac{1}{\sqrt{4\pi \beta_1}} \, \exp \left( -\dfrac{x^2}{4\, \beta_1} \right) \, .
$$
The asymptotic expansion provided by Theorem \ref{thm1} reads:
$$
{\boldsymbol{a}}^{\star n}_\ell \sim \dfrac{1}{\sqrt{4\pi \beta_1 n}} 
\exp \left( -\dfrac{(\ell-\alpha_1 n)^2}{4 \beta_1 n} \right) 
+\sum_{m \ge 1} \dfrac{1}{n^{(m+1)/2}} \, \Big( P_{1,m} (-{\rm d}/{\rm d}x) H^{\beta_1}_2 \Big) \left( \dfrac{\ell-\alpha_1 n}{\sqrt{n}} \right) \, .
$$
It can be rewritten, as in \cite[Chapter VII]{Petrov}, in terms of Hermite polynomials by using the properties of the Gaussian function (the so-called Rodrigues formula for Hermite polynomials, see \cite{Szego}). For instance, we shall see below that the polynomial $P_{1,1}$ is given by:
$$
P_{1,1}(X) \, = \, \dfrac{\gamma_{1,3}}{3\, !} \, X^3 \, ,
$$
where $\gamma_{1,3}$ is the cumulant of order $3$ at the zero frequency, see \eqref{DLk}. This means that the two first terms ($M=1$) in the expansion \eqref{estim} are:
$$
\dfrac{1}{\sqrt{4\pi \beta_1 n}} \exp \left( -\dfrac{(\ell-\alpha_1 n)^2}{4 \beta_1 n} \right) 
-\dfrac{\gamma_{1,3}}{3\, ! \, n} \, \left(H^{\beta_1}_2\right)'''\left( \dfrac{\ell-\alpha_1 n}{\sqrt{n}} \right) \, ,
$$
and these two first terms can be rewritten as (recall the relation $2 \, \beta_1=\sigma_1^2$):
$$
\dfrac{1}{\sqrt{2 \, \pi \sigma_1^2 n}} \exp \left( -\dfrac{x^2}{2} \right) 
-\dfrac{\gamma_{1,3}}{3\, ! \, \sigma_1^4 \, n} \, (x^3-3 \, x) \, \dfrac{1}{\sqrt{2 \, \pi}} \, \exp \left( -\dfrac{x^2}{2} \right) \, ,\quad 
\text{\rm with } \quad x:=\dfrac{\ell-\alpha_1\, n}{\sigma_1 \, \sqrt{n}} \, ,
$$
which coincides with the expression in \cite[Theorem 13, page 205]{Petrov}. If we compare Theorem \ref{thm1} above with \cite[Chapter VII]{Petrov}, the novelty is the (sharp) estimate of the remainder with a Gaussian bound rather than a uniform bound as $o(n^{-(M+1)/2})$ or $O(n^{-(M+2)/2})$ which would not allow to obtain error bounds such as \eqref{estimcoro1} in Corollary \ref{coro1} below. The price to pay is the holomorphy condition on $F_{\boldsymbol{a}}$ in Assumption \ref{ass:1}.

One can also try to compare Theorem \ref{thm1} with the well-known stability results derived in \cite{Thomee}, see also \cite[Chapter 5]{BTW}, for finite difference schemes. Actually, the article \cite{Thomee} is the fundamental reference where the condition \eqref{DLFa} on the Fourier transform $F_a$ 
was first highlighted. The emphasis in \cite{Thomee} or \cite{BTW} was rather on deriving stability bounds on the iterated convolution operator ${\boldsymbol{u}} \mapsto {\boldsymbol{a}}^{\star n} \star {\boldsymbol{u}}$. These bounds are used in \cite{BTW} to derive convergence estimates, mostly on finite time intervals. Our goal here is slightly different since we rather aim at studying the large time behavior of the iterated convolution operator which could open the possibility of studying large time convergence estimates.

An immediate corollary of Theorem \ref{thm1} is the following result which gives an accurate description of the large time behavior of the iterated convolution ${\boldsymbol{a}}^{\star n} \star {\boldsymbol{u}}^0$ for any initial condition ${\boldsymbol{u}}^0 \in \ell^p (\Z;\C)$. Corollary \ref{coro1} below yields an explicit expression for the large time asymptotics of a finite difference scheme corresponding to the convolution with a sequence ${\boldsymbol{a}}$ that satisfies both Assumptions \ref{ass:1} and \ref{ass:2}. It can be used to justify in a sharp quantitative way the link between a numerical scheme and its associated \emph{modified equation}.

\begin{corollary}
\label{coro1}
Let the sequence ${\boldsymbol{a}}$ satisfy Assumptions \ref{ass:1} and \ref{ass:2}. Then for any integer $M \in \N$, there exists a constant $C_M>0$ such that for any sequence ${\boldsymbol{u}}^0 \in \ell^p (\Z;\C)$ with $1 \le p \le +\infty$ and for any $n \in \N^*$, there holds:
\begin{multline}
\label{estimcoro1}
\left\| {\boldsymbol{a}}^{\star n} \star {\boldsymbol{u}}^0 -\sum_{k=1}^K \dfrac{\underline{\kappa}_k^{-\ell} F_{\boldsymbol{a}}(\underline{\kappa}_k)^n}{n^{1/(2\, \mu_k)}} \, H^{\beta_k}_{2 \mu_k} \left( \dfrac{\cdot-\alpha_k n}{n^{1/(2\, \mu_k)}} \right) \star {\boldsymbol{u}}^0 \right. \\
\left. -\sum_{m=1}^M \sum_{k=1}^K 
\dfrac{\underline{\kappa}_k^{-\ell} F_{\boldsymbol{a}}(\underline{\kappa}_k)^n}{n^{(m+1)/(2\, \mu_k)}} \, \Big( P_{k,m} (-{\rm d}/{\rm d}x) H^{\beta_k}_{2 \mu_k} \Big) \left( \dfrac{\cdot-\alpha_k n}{n^{1/(2\, \mu_k)}} \right) \star {\boldsymbol{u}}^0 \right\|_{\ell^p} \le 
\dfrac{C_M \, \| {\boldsymbol{u}}^0 \|_{\ell^p}}{n^{(M+1)/(2\, \mu)}} \, ,
\end{multline}
with $\mu:=\max_k \mu_k$.
\end{corollary}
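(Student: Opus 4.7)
The plan is to derive Corollary \ref{coro1} from Theorem \ref{thm1} via Young's convolution inequality. Let $R_n=(R_n(\ell))_{\ell \in \Z}$ denote the sequence whose $\ell$-th entry is the expression inside the absolute value on the left-hand side of \eqref{estim}, i.e.~the difference between $\boldsymbol{a}^{\star n}_\ell$ and its $M$-th order approximation furnished by Theorem \ref{thm1}. Since convolution with $\boldsymbol{u}^0$ is linear, the left-hand side of \eqref{estimcoro1} is exactly $\| R_n \star \boldsymbol{u}^0 \|_{\ell^p}$, and Young's inequality provides
$$
\| R_n \star \boldsymbol{u}^0 \|_{\ell^p} \, \le \, \| R_n \|_{\ell^1} \, \| \boldsymbol{u}^0 \|_{\ell^p} \, .
$$
It thus suffices to establish the uniform estimate $\| R_n \|_{\ell^1} \le C_M \, n^{-(M+1)/(2\, \mu)}$.

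To estimate $\| R_n \|_{\ell^1}$, I would split the sum over $\ell \in \Z$ into the far region $|\ell|>L\, n$ and the near region $|\ell| \le L\, n$, where $L$ is the constant furnished by Theorem \ref{thm1}. In the far region, \eqref{estim1} yields the exponential bound $| \boldsymbol{a}^{\star n}_\ell | \le \exp(-c_0\, n -c_0\, |\ell|)$, while each attractor contribution $H^{\beta_k}_{2 \, \mu_k}((\ell-\alpha_k \, n)/n^{1/(2\, \mu_k)})$ and the derivative terms $P_{k,m}(-\mathrm{d}/\mathrm{d}x) H^{\beta_k}_{2\, \mu_k}$ inherit the generalized Gaussian decay recalled below Theorem \ref{thm1} (choosing $L$ larger than $\max_k |\alpha_k|$ guarantees a fixed positive separation between $\ell/n$ and each drift $\alpha_k$ in that region). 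Summing over $|\ell|>L\, n$ produces a contribution that decays exponentially in $n$, hence is negligible with respect to any power of $1/n$.

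In the near region, I would apply \eqref{estim} directly. For each $k \in \{1,\dots,K\}$, the discrete sum
$$
\sum_{\ell \in \Z} \dfrac{1}{n^{(M+2)/(2\, \mu_k)}} \, \exp \left( -c_M \, \left( \dfrac{|\ell-\alpha_k \, n|}{n^{1/(2\, \mu_k)}} \right)^{\frac{2\, \mu_k}{2\, \mu_k-1}} \right)
$$
is compared to the corresponding integral through the change of variable $y:=(\ell-\alpha_k\, n)/n^{1/(2\, \mu_k)}$: a standard Riemann-sum argument introduces a Jacobian factor $n^{1/(2\, \mu_k)}$, and since $\int_{\R} \exp(-c_M \, |y|^{2\mu_k/(2\mu_k-1)})\, \mathrm{d}y < \infty$, the total contribution is $O(n^{-(M+1)/(2\, \mu_k)})$. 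Because $\mu_k \le \mu$ for every $k$, the slowest-decaying term corresponds to any $k$ for which $\mu_k=\mu$, which yields the announced bound $\| R_n \|_{\ell^1} \le C_M\, n^{-(M+1)/(2\, \mu)}$ and concludes the proof via Young's inequality.

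The argument is essentially a transfer from the pointwise bound of Theorem \ref{thm1} to an $\ell^1$ bound on the remainder; no serious obstacle is anticipated. The only mildly technical step is the Riemann-sum comparison between discrete sums over $\ell$ and the corresponding stretched-exponential integrals, which is routine once the generalized Gaussian shape of the right-hand side of \eqref{estim} has been granted.
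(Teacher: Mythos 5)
Your proposal is correct and follows essentially the same route as the paper: reduce to an $\ell^1$ bound on the remainder sequence via Young's inequality, handle $|\ell|>L\,n$ by combining \eqref{estim1} with the super-exponential decay of $H^{\beta_k}_{2\,\mu_k}$ and its derivatives (after enlarging $L$ beyond the drifts), and handle $|\ell|\le L\,n$ by summing the generalized Gaussian bound of \eqref{estim} over $\ell$, which costs a factor $n^{1/(2\,\mu_k)}$ and yields $O(n^{-(M+1)/(2\,\mu)})$. This matches the paper's argument in Section \ref{section4}.
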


Some analogues of Theorem \ref{thm1} are proved in \cite{RSC1,RSC2,Coeuret,Randles}, sometimes with restrictions on $K$ (number of tangency points) and/or on the drifts $\alpha_k$, and/or on $M$ (number of terms in the asymptotic expansion). As far as we know, our framework seems to be the most general so far. Our only (crucial !) assumption is the fact that $F_{\boldsymbol{a}}$ has an expansion of the form \eqref{DLFa} at every tangency point. According to the main result in \cite{Thomee}, this corresponds to a \emph{stable} situation where the geometric sequence $({\boldsymbol{a}}^{\star n})_{n \in \N^*}$ is bounded in $\ell^1(\Z;\C)$. From the point of view of partial differential equations, the above asymptotic expansion involves the fundamental solution of some parabolic equation.

Other forms for the expansion \eqref{DLFa} would lead to dispersive behaviors, a prototype of which (the so-called Lax-Wendroff scheme, see \cite{BW}) is studied in depth in \cite{jfcAMBP}. In the dispersive case, the geometric sequence $({\boldsymbol{a}}^{\star n})_{n \in \N^*}$ is no longer bounded in $\ell^1(\Z;\C)$. The reference \cite[Chapter 5]{BW} provides with several comparisons for the Green's function in this case but a complete derivation of a large time expansion with sharp bounds is still missing. We shall deal with the local limit theorem (up to any order) in the dispersive case in a subsequent work. Another very interesting perspective would be to derive analogues of Theorem \ref{thm1} in the multi-dimensional setting where the sequence $\boldsymbol{a}$ is now indexed on the $d$-dimensional lattice $\mathbb{Z}^d$, with $d\geq2$. We refer to \cite{bui,Randles,RSC2,RSC3} for recent developments on the subject. Let us emphasize that the multi-dimensional setting presents a much richer classification of the tangency points compared to the one-dimensional case which has only two cases\footnote{The two cases are the parabolic case, as studied here, and the dispersive case, as addressed in \cite{Thomee,hedstrom,BTW,jfcAMBP,RSC1}.}. Borrowing the terminology used in the aforementioned references, the multi-dimensional analogue of the parabolic case studied here would correspond to the case where all tangency points of $F_{\boldsymbol{a}}$ are of positive homogeneous type, see \cite[Definition 1.3.]{RSC2}.

When the sequence ${\boldsymbol{a}}$ has finite support, the bound \eqref{estim} holds true not only in a large sector $\{ |\ell| \le L \, n\}$ but for any $\ell \in \Z$ (see \cite{Coeuret}). The statement of Theorem \ref{thm1} in that case is therefore more simple since one obtains a global bound for the error between ${\boldsymbol{a}}^{\star n}_\ell$ and its asymptotic expansion. In the general case of a holomorphic $F_{\boldsymbol{a}}$, the decay of ${\boldsymbol{a}}^{\star n}_\ell$ at infinity (for a fixed $n$) is at best exponential which makes the distinction of the two regimes in Theorem \ref{thm1} relevant. Several applications of Theorem \ref{thm1} are given below in Section \ref{section4}. We now give the proof of Theorem \ref{thm1}.

\section{Proof of Theorem \ref{thm1}}
\label{section3}

Theorem \ref{thm1} will ultimately justify that the iterated convolution ${\boldsymbol{a}}^{\star n}_\ell$ is mostly concentrated around the $K$ sectors 
$\cup_k \{ (\ell,n) \, | \, \ell \sim \alpha_k \, n\}$, meaning in particular that ${\boldsymbol{a}}^{\star n}_\ell$ is to some extent negligible for $|\ell | \gg n$. 
We therefore first study this far field regime where $(\ell,n)$ is outside a large enough sector and then turn to the main purpose of this work which is 
the proof of \eqref{estim}.

\subsection{The far field regime}

We first consider the case where the ratio $|\ell|/n$ is large. We start from the expression:
$$
{\boldsymbol{a}}^{\star n}_\ell =\dfrac{1}{2 \, \pi} \int_{-\pi}^\pi {\rm e}^{-\mathbf{i} \,  \ell \, \theta} \, F_{\boldsymbol{a}}({\rm e}^{\mathbf{i} \, \theta})^n \, {\rm d}\theta \, ,
$$
and use the fact that the function $F_{\boldsymbol{a}}$ has a holomorphic extension on an annulus around $\cercle$. Let us assume $\ell>0$ and let us change the integration contour by using the Cauchy formula. Choosing for instance $\delta:=\ln (1+\varepsilon/2)$, with $\varepsilon>0$ given by Assumption \ref{ass:1}, we obtain:
$$
{\boldsymbol{a}}^{\star n}_\ell =\dfrac{1}{2 \, \pi} \int_{-\pi}^\pi {\rm e}^{-\ell \, \delta} {\rm e}^{-\mathbf{i} \, \ell \, \theta} \, F_{\boldsymbol{a}}({\rm e}^{\mathbf{i} \, (\theta-\mathbf{i} \, \delta)})^n \, {\rm d}\theta \, .
$$
Applying the triangle inequality, we thus get the bound:
$$
|{\boldsymbol{a}}^{\star n}_\ell| \le {\rm e}^{-\ell \, \delta} \, \left( \sup_{\theta \in [-\pi,\pi]} |F_{\boldsymbol{a}}({\rm e}^\delta \, {\rm e}^{\mathbf{i} \, \theta})| \right)^n \, ,
$$
meaning that for some well-chosen constant $C_\flat>0$ that does not depend on $\ell$ and $n$, there holds:
$$
|{\boldsymbol{a}}^{\star n}_\ell| \le \exp \Big( -\delta \, \ell +C_\flat \, n \Big) \, ,
$$
as long as $n \in \N^*$ and $\ell \ge 0$. For $\ell \ge (2\, C_\flat/\delta) \, n$, we thus have:
$$
|{\boldsymbol{a}}^{\star n}_\ell| \le \exp \left( -\dfrac{\delta}{2} \, \ell \right) 
\le \exp \left( -\dfrac{\delta}{4} \, \ell  -\dfrac{C_\flat}{2} \, n \right) \, .
$$
We have thus proved the validity of \eqref{estim1} for $\ell \ge L \, n$ and $L>0$ is a well-chosen constant (there is no loss of generality in assuming that $L$ is an integer up to increasing $L$). The case $\ell<0$ is entirely similar and is left to the interested reader.

From now on, the integer $L$ of Theorem \ref{thm1} is fixed as above, and we shall always assume that $\ell \in \Z$ is restricted to those values for which $|\ell| \le L \, n$, with $n \in \N^*$ the discrete time.

\subsection{The local limit theorem}

\subsubsection{Fixing the constants}

The aim of this paragraph is to fix several constants that will arise in various estimates for proving Theorem \ref{thm1}. We start with the following simple observation.

\begin{lemma}
\label{lem:beta}
Let $\beta \in \C$ have positive real part and let $\mu \in \N^*$. Then there exists a constant $C>0$ such that for any $u \in \C$, there holds:
$$
\Re \, (\beta \, u^{2\, \mu}) \ge \dfrac{\Re \, \beta}{2} \, (\Re \, u)^{2\, \mu} - C \, (\Im \, u)^{2\, \mu} \, .
$$
\end{lemma}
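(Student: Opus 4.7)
The plan is to write $u = x + \mathbf{i} \, y$ with $(x,y) \in \R^2$, expand $u^{2\, \mu}$ via the binomial formula, and then control the cross terms by means of Young's inequality. Setting $a := \Re \beta > 0$ and $b := \Im \beta$, a direct computation yields
$\Re (\beta \, u^{2\, \mu}) = a \, x^{2\, \mu} + a \, (-1)^\mu \, y^{2\, \mu} + \sum_{(p,q)} c_{p,q} \, x^p \, y^q$,
where the sum is taken over pairs of integers $p, q \ge 1$ with $p + q = 2\, \mu$, and the real coefficients $c_{p,q}$ depend only on $a$, $b$ and $\mu$. The pure $x^{2\, \mu}$ term is what one wants to preserve (up to halving its coefficient), and the pure $y^{2\, \mu}$ term satisfies the trivial bound $a \, (-1)^\mu \, y^{2\, \mu} \ge -a \, y^{2\, \mu}$ regardless of the parity of $\mu$.

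The main step is the $\eps$-Young inequality applied to each cross term: for integers $p, q \ge 1$ with $p + q = 2\, \mu$ and any $\eps>0$, there exists a constant $C_\eps >0$ such that
$|x|^p \, |y|^q \, \le \, \eps \, x^{2\, \mu} + C_\eps \, y^{2\, \mu}$
holds for every $(x,y) \in \R^2$. This follows from the standard Young inequality with conjugate exponents $2\, \mu/p$ and $2\, \mu/q$, after inserting a free parameter in order to produce the $\eps$. Choosing $\eps$ small enough that $\eps \, \sum_{(p,q)} |c_{p,q}| \le a/2$, one deduces
$\left| \sum_{(p,q)} c_{p,q} \, x^p \, y^q \right| \, \le \, \dfrac{a}{2} \, x^{2\, \mu} + C' \, y^{2\, \mu}$
for some constant $C'>0$ depending only on $a$, $b$ and $\mu$.

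Combining both bounds gives
$\Re (\beta \, u^{2\, \mu}) \, \ge \, a \, x^{2\, \mu} - a \, y^{2\, \mu} - \dfrac{a}{2} \, x^{2\, \mu} - C' \, y^{2\, \mu} \, = \, \dfrac{a}{2} \, x^{2\, \mu} - C \, y^{2\, \mu}$,
which is the statement of the lemma with $C := a + C'$. There is no genuine obstacle in this argument; the only minor subtlety is tracking the sign of the coefficient $a \, (-1)^\mu$ of the pure $y^{2\, \mu}$ term, but since one only seeks a lower bound, this is absorbed uniformly in $\mu$ via the crude estimate $a \, (-1)^\mu \ge -a$.
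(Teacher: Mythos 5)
Your argument is correct and follows essentially the same route as the paper: expand $\Re(\beta\,u^{2\mu})$ in $x=\Re u$, $y=\Im u$ via the binomial formula, and absorb all mixed terms $x^p y^q$ with $p,q\ge 1$ into $\tfrac{\Re\beta}{2}\,x^{2\mu}+C\,y^{2\mu}$ by the $\eps$-form of Young's inequality with exponents $2\mu/p$ and $2\mu/q$. Your explicit separation of the pure $(-1)^\mu y^{2\mu}$ term is a harmless refinement of the same computation.
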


\begin{proof}
The proof simply consists in expanding:
$$
\Re \, (\beta \, u^{2\, \mu})=(\Re \, \beta) \, (\Re \, u^{2\, \mu}) 
-(\Im \, \beta) \, (\Im \, u^{2\, \mu}) \, ,
$$
and then in expanding both the real and imaginary parts of $u^{2\, \mu}$. 
Writing $u=x+\mathbf{i}\, y$, we obtain:
\begin{align*}
\Re \, (\beta \, u^{2\, \mu})=(\Re \, \beta) \, x^{2\, \mu} 
&+(\Re \, \beta) \, \sum_{m=1}^\mu \binom{2 \, \mu}{2 \, m} (-1)^m \, x^{2(\mu-m)} \, y^{2 \, m} \\
&-(\Im \, \beta) \, \sum_{m=0}^{\mu-1} \binom{2 \, \mu}{2 \, m+1} (-1)^m \, x^{2(\mu-m)-1} \, y^{2 \, m+1} \, .
\end{align*}
By repeatedly using Holder's and Young's inequalities, we can obtain the estimate:
\begin{multline*}
\left| (\Re \, \beta) \, \sum_{m=1}^\mu \binom{2 \, \mu}{2 \, m} (-1)^m \, x^{2(\mu-m)} \, y^{2 \, m} -(\Im \, \beta) \, \sum_{m=0}^{\mu-1} \binom{2 \, \mu}{2 \, m+1} (-1)^m \, x^{2(\mu-m)-1} \, y^{2 \, m+1} \right| \\
\le \dfrac{\Re \, \beta}{2} \, x^{2\, \mu} +C \, y^{2\, \mu} \, ,
\end{multline*}
with a large enough constant $C$ (that does not depend on $x$ and $y$). This gives the result of Lemma \ref{lem:beta}.
\end{proof}

Using Assumption \ref{ass:2} and Lemma \ref{lem:beta}, we can fix once and for all two constants $\beta_*>0$ and $\beta^* \ge \beta_*$ such that the following inequalities hold true:
\begin{equation}
\label{defbetaetoiles}
\forall \, k=1,\dots,K \, ,\quad \forall \, u \in \C \, ,\quad 
\Re \, (\beta_k \, u^{2\, \mu_k}) \ge \beta_* \, (\Re \, u)^{2\, \mu_k} - \beta^* \, (\Im \, u)^{2\, \mu_k} \, .
\end{equation}
These inequalities will be helpful later on. We now prove another useful preliminary result.

\begin{lemma}
\label{lem:reste}
Let $k \in \{ 1,\dots,K \}$, and let the coefficients $\gamma_{k,\nu}$ be defined in \eqref{DLk} for $\nu \ge 2\, \mu_k+1$. Then there exists $\delta_0>0$ such that the function:
\begin{equation}
\label{defgk}
g_k \, : \, (w,z) \in \C \times B(0,\delta_0) \longmapsto 
\exp \left( w^{2\, \mu_k} \, \sum_{\nu \ge 1} \dfrac{\gamma_{k,2\, \mu_k+\nu}}{(2\, \mu_k+\nu) \, !} \, z^\nu  \right) \, ,
\end{equation}
is holomorphic on $\C \times B(0,\delta_0)$. Furthermore, for any $M \in \N$, there exists a constant $C=C(k,M)>0$ and some $\delta \in (0,\delta_0)$ such that there holds:
$$
\forall \, (w,z) \in \C \times \overline{\mathcal{C}(0,\delta)} \, , \, 
\left| g_k(w,z) -\sum_{m=0}^M \dfrac{\partial^m g_k}{\partial z^m}(w,0) \, \dfrac{z^m}{m \, !}  \right| \le C \, |z|^{M+1} \, \exp \left( \dfrac{\beta_*}{2} \, (\Re \, w)^{2\, \mu_k} + \beta^* \, (\Im \, w)^{2\, \mu_k}\right) ,
$$
where $\beta_*$ and $\beta^*$ are the constants in \eqref{defbetaetoiles}.
\end{lemma}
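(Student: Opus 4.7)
The plan has three main ingredients: (i) verify holomorphy of $g_k$; (ii) obtain a pointwise bound on $|g_k(w,\zeta)|$ on a small circle $|\zeta|=R$ that matches the target exponential factor in $w$; (iii) extract the Taylor remainder bound via Cauchy's integral formula. For (i), the series $\sum_{\nu \geq 1} \frac{\gamma_{k,2\mu_k+\nu}}{(2\mu_k+\nu)!}\, z^\nu$ is, up to powers of $\mathbf{i}$, the tail of the convergent power series appearing in \eqref{DLk}, so it defines a holomorphic function $\psi_k$ on some disk $B(0,\delta_0)$ with $\psi_k(0)=0$. Multiplying by the entire factor $w^{2\mu_k}$ and composing with $\exp$ then gives holomorphy of $g_k$ on $\mathbb{C} \times B(0,\delta_0)$.

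For step (ii), I would factor $\psi_k(z) = z\,\tilde\psi_k(z)$ with $\tilde\psi_k$ holomorphic and bounded on $\overline{B(0,\delta_0/2)}$, and invoke the elementary inequality $|w|^{2\mu_k} \leq 2^{\mu_k-1}\bigl((\Re w)^{2\mu_k} + (\Im w)^{2\mu_k}\bigr)$. Choosing $R \in (0,\delta_0/2)$ so small that $2^{\mu_k-1}\, R\, \sup_{|\zeta|\leq R}|\tilde\psi_k(\zeta)| \leq \beta_*/2$ — which, since $\beta^* \geq \beta_*$, is automatically also $\leq \beta^*$ — yields for every $w \in \mathbb{C}$ and every $|\zeta|=R$
\begin{equation*}
\Re\bigl(w^{2\mu_k}\, \zeta\, \tilde\psi_k(\zeta)\bigr) \leq |w|^{2\mu_k}\, R\, \sup_{|\zeta|\leq R}|\tilde\psi_k(\zeta)| \leq \frac{\beta_*}{2}\, (\Re w)^{2\mu_k} + \beta^*\, (\Im w)^{2\mu_k},
\end{equation*}
and exponentiating delivers the uniform bound $|g_k(w,\zeta)| \leq \exp\bigl(\frac{\beta_*}{2}(\Re w)^{2\mu_k} + \beta^*(\Im w)^{2\mu_k}\bigr)$ on $\mathbb{C} \times \{|\zeta|=R\}$.

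For step (iii), I would pick $\delta>0$ such that $\overline{\mathcal{C}(0,\delta)} \subset \overline{B(0,R/2)}$ (it suffices that $\delta\sqrt{2} \leq R/2$). With $w$ held fixed, Cauchy's integral formula for the Taylor remainder in the variable $z$ reads
\begin{equation*}
g_k(w,z) - \sum_{m=0}^M \frac{\partial^m g_k}{\partial z^m}(w,0)\, \frac{z^m}{m!} \, = \, \frac{z^{M+1}}{2\pi\mathbf{i}} \oint_{|\zeta|=R} \frac{g_k(w,\zeta)}{\zeta^{M+1}\,(\zeta - z)}\, \mathrm{d}\zeta,
\end{equation*}
and the ML-inequality combined with $|\zeta-z| \geq R/2$ produces the announced estimate with a constant of the form $C = 2/R^{M+1}$. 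I do not expect any serious obstacle here: the only mild subtlety is matching both target constants $\beta_*/2$ and $\beta^*$ by a single smallness condition on $R$, which is handled transparently by the built-in ordering $\beta^* \geq \beta_*$ in \eqref{defbetaetoiles}.
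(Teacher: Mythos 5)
Your proof is correct, but it takes a genuinely different route from the paper's. The paper writes the remainder in the integral (Lagrange) form $\frac{z^{M+1}}{M!}\int_0^1(1-t)^M\,\partial_z^{M+1}g_k(w,tz)\,\mathrm{d}t$ and then bounds the $(M+1)$-th $z$-derivative explicitly via the Fa\`a di Bruno formula; this produces a polynomial prefactor $Q(|w|)$ times $\exp\bigl(C_\sharp\,\delta\,|w|^{2\mu_k}\bigr)$, which is then absorbed into the target exponential by shrinking $\delta$. You instead use the Cauchy integral representation of the Taylor remainder on a circle $|\zeta|=R$, which requires only a sup bound on $g_k$ itself rather than on its high-order $z$-derivatives; this sidesteps Fa\`a di Bruno entirely, avoids the polynomial prefactor, and gives the constant explicitly as $2/R^{M+1}$. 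Both arguments hinge on the same key point: the factorization $\psi_k(z)=z\,\tilde\psi_k(z)$ (equivalently, the fact that $r_k$ vanishes at $0$ in the paper's notation), which makes the exponent $O(R)\,|w|^{2\mu_k}$ and hence absorbable into $\frac{\beta_*}{2}(\Re\,w)^{2\mu_k}+\beta^*(\Im\,w)^{2\mu_k}$ via the convexity inequality $|w|^{2\mu_k}\le 2^{\mu_k-1}\bigl((\Re\,w)^{2\mu_k}+(\Im\,w)^{2\mu_k}\bigr)$, together with $\beta_*\le\beta^*$. The one thing the paper's heavier computation buys is reuse: the same Fa\`a di Bruno expansion of $\partial_z^{m}g_k(w,0)$ reappears immediately afterwards to derive the explicit formula \eqref{valeurPkm} for the polynomials $P_{k,m}$. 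Taken in isolation, your argument is the more economical proof of the lemma.
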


\begin{proof}
Since $F_{\boldsymbol{a}}$ is holomorphic on an annulus $\{ \zeta \in \C \, | \, 1-\varepsilon < |\zeta| <1+\varepsilon \}$, for some $\varepsilon>0$, the power series (in $Z$):
$$
\sum_{\nu \ge 2\, \mu_k+1} \dfrac{\gamma_{k,\nu}}{\nu \, !} \, Z^\nu \, ,
$$
has a positive convergence radius. This implies that for some $\delta_0>0$, the function $g_k$ defined in \eqref{defgk} is holomorphic on $\C \times B(0,\delta_0)$. From now on, the parameter $\delta_0$ is fixed and we consider $\delta \in (0,\delta_0/2)$ in such a way that the closed square $\overline{\mathcal{C}(0,\delta)}$ is included in the open ball $B(0,\delta_0)$. This will allow us to obtain some uniform bounds with respect to the radius $\delta$.

Applying the Taylor formula, we get:
\begin{equation}
\label{taylorgk}
g_k(w,z) -\sum_{m=0}^M \dfrac{\partial^m g_k}{\partial z^m}(w,0) \, \dfrac{z^m}{m \, !} 
\, = \, \dfrac{z^{M+1}}{M \, !} \, \int_0^1 (1-t)^M \, \dfrac{\partial^{M+1} g_k}{\partial z^{M+1}}(w,t\, z) \, {\rm d}t \, .
\end{equation}
The $(M+1)$-th partial derivative of $g_k$ with respect to $z$ is computed by using the Fa\`a di Bruno formula \cite{Comtet}. We introduce the notation:
$$
r_k(z) \, := \, \sum_{\nu \ge 1} \dfrac{\gamma_{k,2\, \mu_k+\nu}}{(2\, \mu_k+\nu) \, !} \, z^\nu \, ,
$$
so that the function $g_k$ reads:
$$
\forall \, (w,z) \in \C \times B(0,\delta_0) \, ,\quad g_k(w,z)=\exp \big( w^{2 \, \mu_k} \, r_k(z) \big) \, .
$$
We now consider $(w,z) \in \C \times \overline{\mathcal{C}(0,\delta)}$ and compute:
$$
\dfrac{1}{(M+1)\, !} \, \dfrac{\partial^{M+1} g_k}{\partial z^{M+1}}(w,z) 
= \exp \big( w^{2 \, \mu_k} \, r_k(z) \big) \, \sum_{\langle \boldsymbol{\nu} \rangle =M+1} \dfrac{w^{2 \, \mu_k \, |\boldsymbol{\nu}|}}{\boldsymbol{\nu} \, !} \, \prod_{\ell \ge 1} \left( \dfrac{r_k^{(\ell)}(z)}{\ell \, !} \right)^{\nu_\ell} \, ,
$$
where the notation $\boldsymbol{\nu}$ refers to a finitely supported integer valued sequence $(\nu_1,\nu_2,\dots)$, and we use the notation\footnote{All these quantities make sense for finitely supported sequences as we consider here.}:
$$
\langle \boldsymbol{\nu} \rangle := \sum_{\ell \ge 1} \, \ell \, \nu_\ell \, ,\quad 
|\boldsymbol{\nu}| := \sum_{\ell \ge 1} \, \nu_\ell \, ,\quad 
\boldsymbol{\nu} \, ! := \prod_{\ell \ge 1} \, \nu_\ell \, ! \, .
$$
We now use uniform bounds for the derivatives $r_k^{(\ell)}$ on the closed square $\overline{\mathcal{C}(0,\delta_0/2)}$, and we use the bound:
$$
\sup_{z \in \overline{\mathcal{C}(0,\delta)}} \, |r_k(z)| \le C_\sharp \, \delta \, ,
$$
where the constant $C_\sharp$ is independent of $\delta \in (0,\delta_0/2)$. The crucial fact here is that $r_k$ vanishes at $0$ so the uniform bound for $r_k$ is at least linear with respect to $\delta$. We end up with:
$$
\forall \, (w,z) \in \C \times \overline{\mathcal{C}(0,\delta)} \, ,\quad  \left| \, \dfrac{\partial^{M+1} g_k}{\partial z^{M+1}}(w,z) \, \right| 
\le Q(|w|)\exp \big( C_\sharp \, \delta \, |w|^{2 \, \mu_k} \big) \, ,
$$
where $Q$ is some real polynomial with nonnegative coefficients (that depend on $M$). Up to choosing $\delta$ small enough, we thus get the bound:
$$
\forall \, (w,z) \in \C \times \overline{\mathcal{C}(0,\delta)} \, ,\quad \left| \, \dfrac{\partial^{M+1} g_k}{\partial z^{M+1}}(w,z) \, \right| 
\le C_M \, \exp \left( \dfrac{\beta_*}{2} \, (\Re \, w)^{2\, \mu_k} + \beta^* \, (\Im \, w)^{2\, \mu_k} \right) \, ,
$$
for some suitable constant $C_M>0$. Using this bound in the Taylor formula \eqref{taylorgk} for $g_k$, we eventually get the result of Lemma \ref{lem:reste}.
\end{proof}

From now on, we consider a given integer $M \in \N$. With the help of Lemma \ref{lem:reste}, we fix once and for all a radius $\delta>0$ and a constant $C>0$ such that, for any $k \in \{ 1,\dots,K \}$, the function $g_k$ in \eqref{defgk} is holomorphic on $\C \times B(0,2\, \delta)$ and satisfies the following bound:
\begin{equation}
\label{estimgk}
\forall \, (w,z) \in \C \times \overline{\mathcal{C}(0,\delta)} \, ,\quad 
\left| g_k(w,z) -\sum_{m=0}^M \dfrac{\partial^m g_k}{\partial z^m}(w,0) \, \dfrac{z^m}{m \, !} \right| 
\le C \, |z|^{M+1} \exp \left( \dfrac{\beta_*}{2} \, (\Re \, w)^{2\, \mu_k} + \beta^* \, (\Im \, w)^{2\, \mu_k}\right) ,
\end{equation}
where $\beta_*$ and $\beta^*$ are the same constants as in \eqref{defbetaetoiles}. There is no loss of generality in assuming that the $K$ subsets $\{ \underline{\kappa}_k \, {\rm e}^z \, | \, z \in \overline{\mathcal{C}(0,\delta)} \}$ do not intersect one another (this amounts to choosing $\delta$ small enough since the points $\underline{\kappa}_k$ are pairwise distinct).
\bigskip

The final ingredient for proving Theorem \ref{thm1} is the following observation.

\begin{lemma}
\label{lem:expressionPkm}
Let $k \in \{ 1,\dots,K \}$, and let the polynomial $P_{k,m}$ be defined by the asymptotic 
expansion \eqref{defPknu} for any $m \ge 1$. Let also the function $g_k$ be defined by \eqref{defgk}. Then there holds:
$$
\forall \, m \ge 1 \, ,\quad \forall \, w \in \C \, ,\quad P_{k,m}(w) =\dfrac{w^m}{m \, !} \, \dfrac{\partial^m g_k}{\partial z^m}(w,0) \, .
$$
\end{lemma}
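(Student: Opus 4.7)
The plan is to relate the two-variable generating function defining the polynomials $P_{k,m}$ directly to the function $g_k$ via a simple substitution, then identify the coefficients of the resulting power series in two different ways.

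First, I would rewrite the generating series for the $P_{k,m}$'s by specializing $(Y,Z)=(w,z)$ in \eqref{defPknu}, which yields
\begin{equation*}
\exp \left( \sum_{\nu \ge 1} \dfrac{\gamma_{k,2\, \mu_k+\nu}}{(2\, \mu_k+\nu) \, !} \, w^{2\mu_k+\nu} \, z^\nu \right) \, = \, 1+\sum_{m \ge 1} \, P_{k,m}(w) \, z^m \, .
\end{equation*}
Factoring $w^{2\mu_k}$ out of the exponent and grouping $w^\nu z^\nu=(wz)^\nu$, the left-hand side becomes exactly $g_k(w,wz)$, where $g_k$ is the function introduced in \eqref{defgk}. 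Thus one has the key identity
\begin{equation*}
g_k(w,wz) \, = \, 1+\sum_{m \ge 1} \, P_{k,m}(w) \, z^m \, ,
\end{equation*}
which, thanks to the holomorphy of $g_k$ on $\C \times B(0,\delta_0)$ stated in Lemma \ref{lem:reste}, is a convergent power series expansion in $z$ (valid for $|z|$ small depending on $w$, but valid as an equality of formal power series for every fixed $w$).

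Next, I would compute the Taylor expansion of $g_k(w,\cdot)$ around $0$ in the second variable and evaluate at $wz$:
\begin{equation*}
g_k(w,wz) \, = \, \sum_{m \ge 0} \dfrac{1}{m\, !} \, \dfrac{\partial^m g_k}{\partial z^m}(w,0) \, (wz)^m
\, = \, 1+\sum_{m \ge 1} \dfrac{w^m}{m\, !} \, \dfrac{\partial^m g_k}{\partial z^m}(w,0) \, z^m \, ,
\end{equation*}
where the constant term equals $g_k(w,0)=\exp(0)=1$. Comparing the two power series in $z$ coefficient by coefficient yields the formula asserted in Lemma \ref{lem:expressionPkm}.

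There is essentially no analytic obstacle: both sides are defined through convergent power series, and matching coefficients is legitimate since $g_k(w,\cdot)$ is holomorphic in a neighborhood of $0$ for every fixed $w$. The only thing worth emphasizing is that the identification works polynomially in $w$, i.e.\ holds for every $w \in \C$ rather than only in a small disk, because both sides are entire in $w$ (as $g_k$ is holomorphic on all of $\C$ in the first variable and so are its partial derivatives in $z$ evaluated at $z=0$). This ensures that the $P_{k,m}$'s are indeed polynomials in $w$, as already announced after \eqref{defPknu}, and gives the closed-form expression in terms of $g_k$.
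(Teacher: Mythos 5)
Your argument is correct and is essentially the paper's own proof: both identify the generating series in \eqref{defPknu} evaluated at $(Y,Z)=(w,z)$ with $g_k(w,wz)$ and then read off the $m$-th Taylor coefficient in $z$, which is exactly $\frac{w^m}{m!}\,\frac{\partial^m g_k}{\partial z^m}(w,0)$. The paper phrases the coefficient extraction as an $m$-th derivative at $Z=0$ rather than as a coefficient comparison of two convergent series, and relegates the domain/holomorphy justification (which you spell out) to a footnote, but these are cosmetic differences.
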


\begin{proof}
The proof is rather straightforward since, by the definition \eqref{defPknu}, we have\footnote{We feel free to skip the justification about the holomorphy of the considered functions on appropriate domains of $\C^2$.}:
\begin{align*}
P_{k,m}(w) &=\dfrac{1}{m \, !} \, \dfrac{\partial^m}{\partial Z^m} \left( \exp \left( \sum_{\nu \ge 1} \dfrac{\gamma_{k,2\, \mu_k+\nu}}{(2\, \mu_k+\nu) \, !} \, w^{2\, \mu_k+\nu} \, Z^\nu \right) \right) \Big|_{Z=0} \\
&=\dfrac{1}{m \, !} \, \dfrac{\partial^m}{\partial Z^m} \Big( g_k(w,w\, Z) \Big) \Big|_{Z=0} = \dfrac{w^m}{m \, !} \, \dfrac{\partial^m g_k}{\partial z^m}(w,0) \, .
\end{align*}
\end{proof}

Combining Lemma \ref{lem:expressionPkm} and the Fa\`a di Bruno formula, we obtain the same  expression as in \cite[Chapter VII]{Petrov}, that is:
\begin{equation}
\label{valeurPkm}
P_{k,m}(X) =X^m \, \sum_{\langle \boldsymbol{\nu} \rangle =m} \dfrac{X^{2 \, \mu_k \, |\boldsymbol{\nu}|}}{\boldsymbol{\nu} \, !} \, \prod_{\ell \ge 1} \left( \dfrac{\gamma_{k,2\, \mu_k+\ell}}{(2 \, \mu_k+\ell) \, !} \right)^{\nu_\ell} \, .
\end{equation}
For instance, we have for any $k \in \{ 1,\dots,K \}$:
$$
P_{k,1}(X)=\dfrac{\gamma_{k,2\, \mu_k+1}}{(2\, \mu_k+1) \, !} \, X^{2\, \mu_k+1} \, ,
$$
and in the case $\gamma_{k,2\, \mu_k+1}=0$, we have furthermore (see Section \ref{section4} for an example):
$$
P_{k,2}(X)=\dfrac{\gamma_{k,2\, \mu_k+2}}{(2\, \mu_k+2) \, !} \, X^{2\, \mu_k+2} \, ,
$$
We now turn to the proof of Theorem \ref{thm1}.

\subsubsection{Proof of Theorem \ref{thm1}}

We warn the reader that many constants appear below. From now on, large positive constants are always denoted $C$ and small positive constants are denoted $c$, with the convention that constants may be relabelled from one line to the other or within the same line. Constants may depend on the integer $M$, that is given, but are always independent of $n$ and $\ell$.

Based on the various constants that have been fixed in the previous paragraph, let us choose some real number $\underline{\theta}$ such that $\exp(\mathbf{i} \, \underline{\theta})$ does not belong to any of the arcs $\{ \underline{\kappa}_k {\rm e}^{\mathbf{i} \, \theta} \, | \, \theta \in [-\delta,\delta] \}$ of the unit circle (this is possible because these arcs do not intersect one another). We start from the expression:
\begin{equation}
\label{integrale1}
{\boldsymbol{a}}^{\star n}_\ell =\dfrac{1}{2 \, \pi} \int_{\underline{\theta}}^{\underline{\theta}+2\, \pi} {\rm e}^{-\mathbf{i} \, \ell \, \theta} \, F_{\boldsymbol{a}}({\rm e}^{\mathbf{i} \, \theta})^n \, {\rm d}\theta \, .
\end{equation}
We first split the integral in \eqref{integrale1}. For any $k=1,\dots,K$, we consider $\theta_k \in \R$ such that $\underline{\kappa}_k=\exp(\mathbf{i} \, \theta_k)$ and $\theta_k$ belongs to the open interval $(\underline{\theta},\underline{\theta}+2\, \pi)$. Then the intervals $[\theta_k-\delta,\theta_k+\delta]$ are pairwise disjoint and each of them is included in $(\underline{\theta},\underline{\theta}+2\, \pi)$. The important remark is that when $\theta$ does not belong to a segment of the form $[\theta_k-\delta,\theta_k+\delta]$, then the factor $F_{\boldsymbol{a}}({\rm e}^{\mathbf{i} \theta})$ in \eqref{integrale1} belongs to the open unit disk. Splitting the integral in \eqref{integrale1} and using a continuity argument, there exists a positive constant $c$ such that:
\begin{equation}
\label{integrale2}
\left| {\boldsymbol{a}}^{\star n}_\ell -\sum_{k=1}^K \dfrac{1}{2 \, \pi} \int_{\theta_k-\delta}^{\theta_k+\delta} {\rm e}^{-\mathbf{i} \, \ell \, \theta} \, F_{\boldsymbol{a}}({\rm e}^{\mathbf{i} \, \theta})^n \, {\rm d}\theta \right| \le {\rm e}^{-c \, n} \, ,
\end{equation}
uniformly with respect to $\ell$ and $n$. Performing changes of variables in each integral on the left hand side of \eqref{integrale2} (just shift the interval in $\theta$), we thus get:
\begin{equation}
\label{integrale3}
\left| {\boldsymbol{a}}^{\star n}_\ell -\sum_{k=1}^K \dfrac{\underline{\kappa}_k^{-\ell} F_{\boldsymbol{a}}(\underline{\kappa}_k)^n}{2 \, \pi} \int_{-\delta}^{+\delta} {\rm e}^{-\mathbf{i} \, \ell \, \theta} \, \Big( F_{\boldsymbol{a}}(\underline{\kappa}_k)^{-1} \, F_{\boldsymbol{a}}(\underline{\kappa}_k \, {\rm e}^{\mathbf{i} \, \theta}) \Big)^n \, {\rm d}\theta \right| \le {\rm e}^{-c \, n} \, .
\end{equation}
We now use the holomorphy of $\theta \mapsto F_{\boldsymbol{a}}(\underline{\kappa}_k \, {\rm e}^{\mathbf{i} \, \theta})$ on $\overline{\mathcal{C}(0,\delta)}$ together with the convergent power series expansion \eqref{DLk}. This modifies \eqref{integrale3} accordingly into:
\begin{equation}
\label{integrale4}
\left| {\boldsymbol{a}}^{\star n}_\ell -\sum_{k=1}^K \dfrac{\underline{\kappa}_k^{-\ell} F_{\boldsymbol{a}}(\underline{\kappa}_k)^n}{2 \, \pi} \int_{-\delta}^{+\delta} {\rm e}^{-\mathbf{i} (\ell-n\alpha_k) \theta} \, {\rm e}^{-n \beta_k \theta^{2\, \mu_k}} \exp \left( n \, \sum_{\nu \ge 2\, \mu_k+1} \dfrac{\gamma_{k,\nu}}{\nu \, !} (\mathbf{i} \theta)^\nu \right) \, {\rm d}\theta \right| \le {\rm e}^{-c \, n} \, .
\end{equation}
For ease of notation, we introduce the notation:
\begin{equation}
\label{defxk}
\forall \, k=1,\dots,K \, ,\quad x_k := \dfrac{\ell-n\alpha_k}{n^{1/(2\, \mu_k)}} \, .
\end{equation}
We change variables $\theta \rightarrow \theta/n^{1/(2\, \mu_k)}$ in each integral on the left hand side of \eqref{integrale4} to get:
\begin{equation}
\label{estimation1}
\left| {\boldsymbol{a}}^{\star n}_\ell -\sum_{k=1}^K \dfrac{\underline{\kappa}_k^{-\ell} F_{\boldsymbol{a}}(\underline{\kappa}_k)^n}{2 \, \pi \, n^{1/(2\, \mu_k)}} \int_{-\delta n^{1/(2\, \mu_k)}}^{+\delta n^{1/(2\, \mu_k)}} {\rm e}^{-\mathbf{i} \, x_k \, \theta} \, {\rm e}^{-\beta_k \, \theta^{2\, \mu_k}} g_k \left( \mathbf{i} \, \theta,\dfrac{\mathbf{i} \, \theta}{n^{1/(2\, \mu_k)}} \right) \, {\rm d}\theta \right| \le {\rm e}^{-c \, n} \, ,
\end{equation}
where we have used the notation $g_k$ introduced in Lemma \ref{lem:reste}.
\bigskip

The final step of the proof consists in approximating $g_k$ by its Taylor expansion with respect to its second argument (this is where Lemma \ref{lem:reste} will be useful) and then by approximating the integral on the large segment $[-\delta n^{1/(2\, \mu_k)},+\delta n^{1/(2\, \mu_k)}]$ by the integral over $\R$. Namely, by using the triangle inequality, the preliminary estimate \eqref{estimation1} yields\footnote{Recall that the integer $M$ has been given since the previous paragraph.}:
\begin{equation}
\label{estimation2}
\left| {\boldsymbol{a}}^{\star n}_\ell -\sum_{k=1}^K \dfrac{\underline{\kappa}_k^{-\ell} F_{\boldsymbol{a}}(\underline{\kappa}_k)^n}{2 \, \pi \, n^{1/(2\, \mu_k)}} \int_\R {\rm e}^{-\mathbf{i} \, x_k \, \theta} \, {\rm e}^{-\beta_k \, \theta^{2\, \mu_k}} \sum_{m=0}^M \dfrac{(\mathbf{i} \, \theta)^m}{m\, ! \, n^{m/(2\, \mu_k)}} \, \dfrac{\partial^m g_k}{\partial z^m} (\mathbf{i} \, \theta,0) \, {\rm d}\theta \right| \le {\rm e}^{-c \, n} 
+|\varepsilon_{\ell,n}^1| +|\varepsilon_{\ell,n}^2| \, ,
\end{equation}
where the error terms $\varepsilon_{\ell,n}^1$ and $\varepsilon_{\ell,n}^2$ are defined as follows:
\begin{align}
\varepsilon_{\ell,n}^1 &:= \sum_{k=1}^K \dfrac{\underline{\kappa}_k^{-\ell} F_{\boldsymbol{a}}(\underline{\kappa}_k)^n}{2 \, \pi \, n^{1/(2\, \mu_k)}} \int_{-\delta \,  n^{1/(2\, \mu_k)}}^{+\delta \, n^{1/(2\, \mu_k)}} {\rm e}^{-\mathbf{i} \, x_k \, \theta}  {\rm e}^{-\beta_k \, \theta^{2\, \mu_k}} \notag \\
& \qquad \qquad \qquad \qquad \qquad \qquad \qquad \times \left( g_k \left( \mathbf{i} \, \theta,\dfrac{\mathbf{i} \, \theta}{n^{1/(2\, \mu_k)}} \right) -\sum_{m=0}^M \dfrac{(\mathbf{i} \, \theta)^m}{m\, ! \, n^{m/(2\, \mu_k)}} \, \dfrac{\partial^m g_k}{\partial z^m} (\mathbf{i} \, \theta,0) \right) {\rm d}\theta ,\label{deferror1} \\
\varepsilon_{\ell,n}^2 &:= \sum_{k=1}^K \dfrac{\underline{\kappa}_k^{-\ell} F_{\boldsymbol{a}}(\underline{\kappa}_k)^n}{2 \, \pi \, n^{1/(2\, \mu_k)}} \int_{\R \setminus [-\delta \, n^{1/(2\, \mu_k)},+\delta \, n^{1/(2\, \mu_k)}]}  {\rm e}^{-\mathbf{i} \, x_k \, \theta} \, {\rm e}^{-\beta_k \, \theta^{2\, \mu_k}} \sum_{m=0}^M \dfrac{(\mathbf{i} \, \theta)^m}{m\, ! \, n^{m/(2\, \mu_k)}} \, \dfrac{\partial^m g_k}{\partial z^m} (\mathbf{i} \, \theta,0) \, {\rm d}\theta \, .\label{deferror2}
\end{align}

\paragraph{Estimate of the error terms.} It is useful below to adopt the convention $P_{k,0}(X):=1$ so that the result of Lemma \ref{lem:expressionPkm} holds not only for $m \in \N^*$ but for any integer $m$.

Let us start with the estimate of the error term $\varepsilon_{\ell,n}^2$ defined in \eqref{deferror2}. We recall that both $\underline{\kappa}_k$ and $F_{\boldsymbol{a}}(\underline{\kappa}_k)$ have modulus $1$, and we apply Lemma \ref{lem:expressionPkm} to simplify the integrand in \eqref{deferror2}. We obtain:
$$
|\varepsilon_{\ell,n}^2| \le \sum_{k=1}^K \sum_{m=0}^M \dfrac{1}{2 \, \pi \,  n^{(m+1)/(2\, \mu_k)}} \int_{\R \setminus [-\delta \, n^{1/(2\, \mu_k)},+\delta \, n^{1/(2\, \mu_k)}]} {\rm e}^{-(\text{\rm Re} \, \beta_k) \, \theta^{2\, \mu_k}} |P_{k,m}(\mathbf{i} \, \theta)| \, {\rm d}\theta \, .
$$
Recalling that all the $\beta_k$'s have positive real part and $n \ge 1$, we find that there exists a constant $C$ that is independent of $n$ and $\ell$ such that there holds:
$$
|\varepsilon_{\ell,n}^2| \le C \, \sum_{k=1}^K \int_{\R \setminus [-\delta \, n^{1/(2\, \mu_k)},+\delta \, n^{1/(2\, \mu_k)}]} {\rm e}^{-(\text{\rm Re} \, \beta_k/2) \, \theta^{2\, \mu_k}} \, {\rm d}\theta \, .
$$
We thus obtain the exponential bound:
\begin{equation}
\label{estimerror2}
|\varepsilon_{\ell,n}^2| \le C \, {\rm e}^{-c \, n} \, ,
\end{equation}
with positive constants $C$ and $c$ that do not depend on $\ell$ and $n$.
\bigskip

It remains to obtain a bound for the first error term $\varepsilon_{\ell,n}^1$ defined in \eqref{deferror1}. There holds:
\begin{multline}
\label{estimerror1-init}
|\varepsilon_{\ell,n}^1| \le \sum_{k=1}^K n^{-1/(2\, \mu_k)} \left| \int_{-\delta \, n^{1/(2\, \mu_k)}}^{+\delta \, n^{1/(2\, \mu_k)}} {\rm e}^{-\mathbf{i} \, x_k \, \theta} \, {\rm e}^{-\beta_k \, \theta^{2\, \mu_k}} \right. \\
\left. \left( g_k \left( \mathbf{i} \, \theta,\dfrac{\mathbf{i} \, \theta}{n^{1/(2\, \mu_k)}} \right) -\sum_{m=0}^M \dfrac{(\mathbf{i} \, \theta)^m}{m\, ! \, n^{m/(2\, \mu_k)}} \, \dfrac{\partial^m g_k}{\partial z^m} (\mathbf{i} \, \theta,0) \right) {\rm d}\theta \right| ,
\end{multline}
and we are now going to use a contour deformation (for each $k$) in order to derive a sharp bound for $\varepsilon_{\ell,n}^1$ (this is the reason why we have not used the triangle inequality in the integrals so far).

We consider an integer $k \in \{ 1,\dots,K \}$ and assume that $x_k$ in \eqref{defxk} is nonnegative (similar arguments yield analogous bounds when $x_k$ is negative, the contour depicted in Figure \ref{fig:contour1} below has just to be switched to the upper half complex plane). We then define:
\begin{equation}
\label{defXi}
\Xi := \begin{cases}
\left( \dfrac{x_k}{4 \, \mu_k \, \beta^*} \right)^{1/(2\, \mu_k-1)} \, ,& 
\text{\rm if } \dfrac{x_k}{4 \, \mu_k \, \beta^*} \le \delta^{2\, \mu_k-1} \, n^{(2\, \mu_k-1)/(2\, \mu_k)} \, ,\\
\, & \\
\delta \, n^{1/(2\, \mu_k)} \, ,&\text{\rm if } \dfrac{x_k}{4 \, \mu_k \, \beta^*} \ge \delta^{2\, \mu_k-1} \, n^{(2\, \mu_k-1)/(2\, \mu_k)} \, ,
\end{cases}
\end{equation}
so that, in particular, there always holds:
\begin{equation}
\label{estimsegment1}
\forall \, u \in [0,\Xi] \, ,\quad 0 \le 2 \, \beta^* \, u^{2\, \mu_k-1} \le \dfrac{x_k}{2\, \mu_k} \, ,
\end{equation}
and $\Xi/n^{1/(2\, \mu_k)} \le \delta$. Consequently, for any $z$ on the contour that is depicted in blue in Figure \ref{fig:contour1}, we have $\max (|\text{\rm Re } z|,|\text{\rm Im } z|)/n^{1/(2\, \mu_k)} \le \delta$ and we shall therefore be able to apply Cauchy's formula for holomorphic functions and also use the estimate of Lemma \ref{lem:reste}.

Namely, Cauchy's formula gives:
\begin{multline*}
\int_{-\delta \, n^{1/(2\, \mu_k)}}^{+\delta \, n^{1/(2\, \mu_k)}} {\rm e}^{-\mathbf{i} \, x_k \, \theta} \, {\rm e}^{-\beta_k \, \theta^{2\, \mu_k}} \left( g_k \left( \mathbf{i} \, \theta,\dfrac{\mathbf{i} \, \theta}{n^{1/(2\, \mu_k)}} \right) -\sum_{m=0}^M \dfrac{(\mathbf{i} \, \theta)^m}{m\, ! \, n^{m/(2\, \mu_k)}} \, \dfrac{\partial^m g_k}{\partial z^m} (\mathbf{i} \, \theta,0) \right) \, {\rm d}\theta \\ =\varepsilon_{\ell,n}^{1,1}+\varepsilon_{\ell,n}^{1,2}
+\varepsilon_{\ell,n}^3 \, ,
\end{multline*}
where $\varepsilon_{\ell,n}^{1,1}$, resp. $\varepsilon_{\ell,n}^{1,2}$, corresponds to the integral on the left, resp. right, vertical segment, and $\varepsilon_{\ell,n}^3$ corresponds to the integral on the horizontal segment (see Figure \ref{fig:contour1}). We omit the dependence on $k$ of each integral for the sake of simplicity.

\begin{figure}[ht!]
\begin{center}
\begin{tikzpicture}[scale=1.25,>=latex]
\draw[black,->] (-4,0) -- (4,0);
\draw[black,->] (0,-3)--(0,0.5);
\draw[thick,black] (-3,0) -- (3,0);
\draw[thick,blue,->] (-3,0) -- (-3,-1);
\draw[thick,blue] (-3,-1) -- (-3,-2);
\draw[thick,blue,->] (-3,-2) -- (1,-2);
\draw[thick,blue] (1,-2) -- (3,-2);
\draw[thick,blue,->] (3,-2) -- (3,-0.9);
\draw[thick,blue] (3,-0.9) -- (3,0);
\draw (-3,0) node[above]{$-\delta \, n^{1/(2\, \mu_k)}$};
\draw (3,0) node[above]{$\delta \, n^{1/(2\, \mu_k)}$};
\draw (0.15,0) node[above]{$0$};
\draw (-3.5,-1.05) node[above]{{\color{red}$\varepsilon_{\ell,n}^{1,1}$}};
\draw (3.5,-1.05) node[above]{{\color{red}$\varepsilon_{\ell,n}^{1,2}$}};
\draw (-1.45,-2.8) node[above]{{\color{red}$\varepsilon_{\ell,n}^3$}};
\draw[thick,red,->] (-3.05,-1.5) arc (270:180:0.5) ;
\draw[thick,red,->] (3.05,-1.5) arc (270:360:0.5) ;
\draw[thick,red,->] (-0.9,-2.05) -- (-1.3,-2.45);
\draw (-0.35,-2) node[above]{$-\mathbf{i} \, \Xi$};
\draw (1.5,1) node {$\C$};
\node (centre) at (-3,0){$\bullet$};
\node (centre) at (3,0){$\bullet$};
\node (centre) at (-3,-2){$\bullet$};
\node (centre) at (3,-2){$\bullet$};
\end{tikzpicture}
\caption{The integration contour in the case $x_k \ge 0$ (in blue). The bullets correspond to the endpoints of the three segments that define the new contour. The initial contour is depicted in black. Each new integral appears in red.}
\label{fig:contour1}
\end{center}
\end{figure}
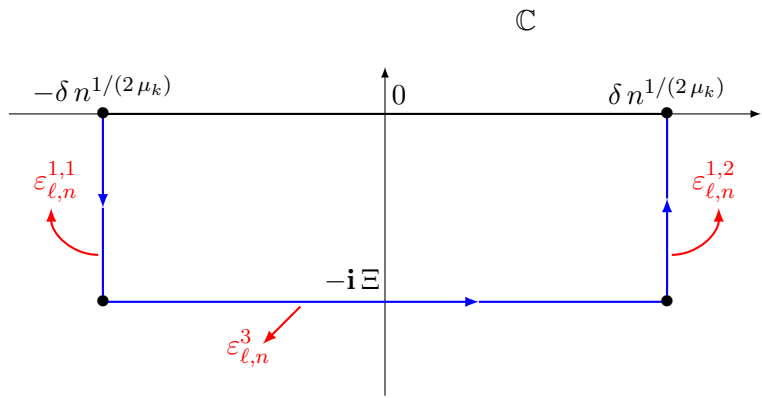

We start with the integrals on the vertical segments and compute:
\begin{multline*}
\varepsilon_{\ell,n}^{1,1} =-\mathbf{i} \, \int_0^\Xi {\rm e}^{-\mathbf{i} \, x_k \, (-\delta \, n^{1/(2\, \mu_k)}-\mathbf{i} \, u)} \, {\rm e}^{-\beta_k \, (-\delta \, n^{1/(2\, \mu_k)}-\mathbf{i} \, u)^{2\, \mu_k}} \\
\left( g_k \left( -\mathbf{i} \, \delta \, n^{1/(2\, \mu_k)}+u,-\mathbf{i} \, \delta +\dfrac{u}{n^{1/(2\, \mu_k)}} \right) -\sum_{m=0}^M \dfrac{(-\mathbf{i} \, \delta \, n^{1/(2\, \mu_k)}+u)^m}{m\, ! \, n^{m/(2\, \mu_k)}} \, \dfrac{\partial^m g_k}{\partial z^m} (-\mathbf{i} \, \delta \, n^{1/(2\, \mu_k)}+u),0) \right) \, {\rm d}u \, .
\end{multline*}
We apply the triangle inequality, use the inequalities \eqref{defbetaetoiles} and \eqref{estimgk} to get:
$$
|\varepsilon_{\ell,n}^{1,1}| \le C \, {\rm e}^{-\frac{\beta_* \, \delta^{2\, \mu_k}}{2} \, n} \, \int_0^\Xi {\rm e}^{-x_k \, u} \, {\rm e}^{2\, \beta^* \, u^{2\, \mu_k}} {\underbrace{\left| -\mathbf{i} \, \delta +\dfrac{u}{n^{1/(2\, \mu_k)}}  \right|}_{\le \sqrt{2} \, \delta}}^{M+1} \, {\rm d}u \, .
$$
We then use \eqref{estimsegment1} to get, for suitable positive constants $C$ and $c$ that do not depend on $x_k$ nor on $n$:
$$
|\varepsilon_{\ell,n}^{1,1}| \le C \, {\rm e}^{-c \, n} \, \int_0^\Xi \exp \left( -\dfrac{(2\, \mu_k-1)}{2\, \mu_k} \, x_k \, u \right) \, {\rm d}u \, .
$$
Since $x_k$ is nonnegative and $\mu_k \ge 1$, we have:
$$
|\varepsilon_{\ell,n}^{1,1}| \le C \, \Xi \, {\rm e}^{-c \, n} \, ,
$$
but since $\Xi$ is not larger than $\delta \, n^{1/(2\, \mu_k)}$, we end up with the exponential bound:
\begin{equation}
\label{estimerror1-1}
|\varepsilon_{\ell,n}^{1,1}| \le C \, {\rm e}^{-c \, n} \, ,
\end{equation}
for suitable constants $C$ and $c$ that do not depend on $\ell$ and $n$. The estimate of the integral $\varepsilon_{\ell,n}^{1,2}$ on the right vertical segment is entirely similar. Going back to \eqref{estimation2}, we use the estimate \eqref{estimerror2} as well as \eqref{estimerror1-1} in \eqref{estimerror1-init}. We have thus obtained so far the estimate:
\begin{equation}
\label{estimation3}
\left| {\boldsymbol{a}}^{\star n}_\ell -\sum_{k=1}^K \dfrac{\underline{\kappa}_k^{-\ell} F_{\boldsymbol{a}}(\underline{\kappa}_k)^n}{2 \, \pi} \int_\R {\rm e}^{-\mathbf{i} \, x_k \, \theta} \, {\rm e}^{-\beta_k \, \theta^{2\, \mu_k}} \sum_{m=0}^M \dfrac{P_{k,m}(\mathbf{i} \, \theta)}{n^{m/(2\, \mu_k)}} \, {\rm d}\theta \right| \le C \, {\rm e}^{-c \, n} +\sum_{k=1}^K n^{-1/(2\, \mu_k)}\, |\varepsilon_{\ell,n}^3| \, ,
\end{equation}
where we have used Lemma \ref{lem:expressionPkm} to simplify the integral on the left hand side of \eqref{estimation3} and we recall that the notation $\varepsilon_{\ell,n}^3$ omits the index $k$ for simplicity.

It remains to compute and estimate the integral $\varepsilon_{\ell,n}^3$ on the horizontal segment depicted on Figure \ref{fig:contour1} (for each index $k$). We have:
\begin{multline*}
\varepsilon_{\ell,n}^3 =\int_{-\delta \, n^{1/(2\, \mu_k)}}^{+\delta \, n^{1/(2\, \mu_k)}} {\rm e}^{-\mathbf{i} \, x_k \, (\theta-\mathbf{i} \, \Xi)} \, {\rm e}^{-\beta_k \, (\theta-\mathbf{i} \, \Xi)^{2\, \mu_k}} \\
\left( g_k \left( \theta-\mathbf{i} \, \Xi,\dfrac{\theta-\mathbf{i} \, \Xi}{n^{1/(2\, \mu_k)}} \right) -\sum_{m=0}^M \dfrac{(\theta-\mathbf{i} \, \Xi)^m}{m\, ! \, n^{m/(2\, \mu_k)}} \, \dfrac{\partial^m g_k}{\partial z^m} (\theta-\mathbf{i} \, \Xi,0) \right) \, {\rm d}\theta \, .
\end{multline*}
We use again the inequalities \eqref{defbetaetoiles} and \eqref{estimgk} to get:
$$
|\varepsilon_{\ell,n}^3| \le C \, \exp \left( -x_k \, \Xi +2\, \beta^* \, \Xi^{2\, \mu_k} \right) \, \int_{-\delta \, n^{1/(2\, \mu_k)}}^{+\delta \, n^{1/(2\, \mu_k)}} {\rm e}^{-\frac{\beta_*}{2} \, \theta^{2 \, \mu_k}} \,  \dfrac{|\theta -\mathbf{i} \, \Xi|^{M+1}}{n^{(M+1)/(2\, \mu_k)}} \, {\rm d}\theta \, .
$$
With our choice for $\Xi$, we obtain (see \eqref{estimsegment1}):
$$
n^{(M+1)/(2\, \mu_k)} \, |\varepsilon_{\ell,n}^3| \le C \, \exp \left( -\dfrac{(2\, \mu_k-1)}{2\, \mu_k)} \, x_k \, \Xi \right) \, \int_\R \Big( |\theta|^{M+1}+\Xi^{M+1} \Big) \, {\rm e}^{-\frac{\beta_*}{2} \, \theta^{2 \, \mu_k}} \, {\rm d}\theta \, ,
$$
and this gives:
\begin{equation}
\label{estimerror1-2}
n^{(M+1)/(2\, \mu_k)} \, |\varepsilon_{\ell,n}^3| \le C \, \Big( 1+\Xi^{M+1} \Big) \, \exp \left( -\dfrac{(2\, \mu_k-1)}{2\, \mu_k)} \, x_k \, \Xi \right) \, .
\end{equation}

Let us go back to the definition of the parameter $\Xi$ and split the final argument between the two possible regimes for $x_k$. In the first case of \eqref{defXi}, we have
\begin{multline*}
\Big( 1+\Xi^{M+1} \Big) \, \exp \left( -\dfrac{(2\, \mu_k-1)}{2\, \mu_k)} \, x_k \, \Xi \right) =
\left( 1+\dfrac{x_k^{(M+1)/(2\, \mu_k-1)}}{C} \right) \exp \left( -c \, x_k^{(2\, \mu_k)/(2\, \mu_k-1)} \right) \\
\le C \, \exp \left( -c \, x_k^{(2\, \mu_k)/(2\, \mu_k-1)} \right) \, .
\end{multline*}
In the second case of \eqref{defXi}, we have $x_k \, \Xi \ge c \, n$, and therefore:
$$
\Big( 1+\Xi^{M+1} \Big) \, \exp \left( -\dfrac{(2\, \mu_k-1)}{2\, \mu_k)} \, x_k \, \Xi \right) 
\le C \, n^{(M+1)/(2\, \mu_k)} \, {\rm e}^{-c \, n} \le C \, {\rm e}^{-c \, n} \, .
$$
Whatever the value of $x_k$, \eqref{estimerror1-2} thus gives:
$$
|\varepsilon_{\ell,n}^3| \le C \, {\rm e}^{-c \, n} +\dfrac{C}{n^{(M+1)/(2\, \mu_k)}} \, \exp \left( -c \, \left( \dfrac{|\ell-\alpha_k \, n|}{n^{1/(2\, \mu_k)}} \right)^{\frac{2\, \mu_k}{2\, \mu_k-1}} \right) \, ,
$$
and \eqref{estimation3} thus implies the estimate:
\begin{multline}
\label{estimation4}
\left| {\boldsymbol{a}}^{\star n}_\ell -\sum_{k=1}^K \dfrac{\underline{\kappa}_k^{-\ell} F_{\boldsymbol{a}}(\underline{\kappa}_k)^n}{2 \, \pi \, n^{1/(2\, \mu_k)}} \int_\R {\rm e}^{-\mathbf{i} \, x_k \, \theta} \, {\rm e}^{-\beta_k \, \theta^{2\, \mu_k}} \sum_{m=0}^M \dfrac{P_{k,m}(\mathbf{i} \, \theta)}{n^{m/(2\, \mu_k)}} \, {\rm d}\theta \right| \\
\le C \, {\rm e}^{-c \, n} +C \, \sum_{k=1}^K \dfrac{1}{n^{(M+2)/(2\, \mu_k)}} \, \exp \left( -c \, \left( \dfrac{|\ell-\alpha_k \, n|}{n^{1/(2\, \mu_k)}} \right)^{\frac{2\, \mu_k}{2\, \mu_k-1}} \right) \, ,
\end{multline}
that holds for any $\ell \in \Z$ and $n \in \N^*$. Note that we have not used the inequality $|\ell| \le L \, n$ to derive \eqref{estimation4}.

\paragraph{End of the proof.} We start from \eqref{estimation4} and use the properties of the Fourier transform to simplify the left hand side into (recall the definition \eqref{defHmubeta}):
\begin{multline}
\label{estimation5}
\left| {\boldsymbol{a}}^{\star n}_\ell -\sum_{k=1}^K \sum_{m=0}^M \dfrac{\underline{\kappa}_k^{-\ell} F_{\boldsymbol{a}}(\underline{\kappa}_k)^n}{n^{(m+1)/(2\, \mu_k)}} P_{k,m}(-{\rm d}/{\rm d}x) H_{2\, \mu_k}^{\beta_k} (x_k)  \right| \\
\le C \, {\rm e}^{-c \, n} +C \, \sum_{k=1}^K \dfrac{1}{n^{(M+2)/(2\, \mu_k)}} \, \exp \left( -c \, \left( \dfrac{|\ell-\alpha_k \, n|}{n^{1/(2\, \mu_k)}} \right)^{\frac{2\, \mu_k}{2\, \mu_k-1}} \right) \, ,
\end{multline}
where $x_k$ is defined in \eqref{defxk}. The only (minor) task is to show that the exponentially small term in $n$ can be absorbed into the generalized Gaussian functions. Actually, we shall show that this exponentially small term in $n$ is lower than any of the terms in the sum on the right hand side of \eqref{estimation5} (for instance, the first term in the sum, which corresponds to $k=1$). This is where the assumption $|\ell| \le L \, n$ is crucial. Namely, we aim at showing that for a given constant $c_0>0$, there exist positive constants $C_1$ and $c_1$ such that for $|\ell| \le L \, n$, there holds:
$$
{\rm e}^{-c_0 \, n} \le \dfrac{C_1}{n^{(M+2)/(2\, \mu_1)}} \exp \left( -c_1 \, \left( \dfrac{|\ell-\alpha_1 \, n|}{n^{1/(2\, \mu_1)}} \right)^{\frac{2\, \mu_1}{2\, \mu_1-1}} \right) \, ,
$$
which follows from choosing $C_1$ such that:
$$
\forall \, n \ge 1 \, ,\quad n^{(M+2)/(2\, \mu_1)} \, {\rm e}^{-c_0 \, n} \le C_1 \, {\rm e}^{-\frac{c_0}{2} \, n} \, ,
$$
and then by choosing $c_1$ small enough such that, for $|\ell| \le L \, n$, there holds:
$$
c_1 \, |\ell-\alpha_1 \, n|^{\frac{2\, \mu_1}{2\, \mu_1-1}} \le \dfrac{c_0}{2} \, n^{\frac{2\, \mu_1}{2\, \mu_1-1}} \, .
$$
The proof of Theorem \ref{thm1} is now complete.

\section{Consequences and examples}
\label{section4}

\subsection{Large time asymptotics for finite difference schemes}

In this Paragraph, we prove Corollary \ref{coro1}. We thus consider a sequence $\boldsymbol{a}$ that satisfies Assumptions \ref{ass:1} and \ref{ass:2}, and consider an integer $M \in \N$. We recall, see \cite{Coeuret}, that for any $\beta$ with positive real part, the function $H_{2\, \mu}^\beta$ has super-exponential decay at infinity as well as its derivatives:
$$
\forall \, N \in \N \, ,\quad \exists \, C >0 \, ,\quad \forall \, x \in \R \, ,\quad 
|H_{2\, \mu}^\beta(x)| +\cdots+|(H_{2\, \mu}^\beta)^{(N)}(x)| \le C \, \exp \left( -\dfrac{1}{C} \, |x|^{\frac{2\, \mu}{2\, \mu-1}} \right) \, .
$$
In Theorem \ref{thm1}, we can always choose the integer $L$ such that $L \ge 2\, (1+\max_k |\alpha_k|)$, so that for $|\ell| \ge L \, n$, there holds:
$$
|\ell-\alpha_k \, n| \ge \dfrac{|\ell|}{2}+n \, .
$$
For $|\ell| \ge L \, n$, the bound of Theorem \ref{thm1} on ${\boldsymbol{a}}_\ell^{\star n}$ and the above bound for $H_{2\, \mu}^\beta$ and its derivatives imply the error bound:
\begin{multline}
\label{estim-aux}
\left| {\boldsymbol{a}}^{\star n}_\ell -\sum_{k=1}^K \dfrac{\underline{\kappa}_k^{-\ell} F_{\boldsymbol{a}}(\underline{\kappa}_k)^n}{n^{1/(2\, \mu_k)}} \, H^{\beta_k}_{2 \, \mu_k} \left( \dfrac{\ell-\alpha_k n}{n^{1/(2\, \mu_k)}} \right)
-\sum_{k=1}^K \sum_{m=1}^M \dfrac{\underline{\kappa}_k^{-\ell} F_{\boldsymbol{a}}(\underline{\kappa}_k)^n}{n^{(m+1)/(2\, \mu_k)}} \, \Big( P_{k,m} (-{\rm d}/{\rm d}x) H^{\beta_k}_{2 \, \mu_k} \Big) \left( \dfrac{\ell-\alpha_k n}{n^{1/(2\, \mu_k)}} \right) \right| \\
\le C \, \exp (-c \, n -c \, |\ell|) \, ,
\end{multline}
for $|\ell| \ge L \, n$.
\bigskip


Let now ${\boldsymbol{u}}^0 \in \ell^p(\Z;\C)$ with $1 \le p \le +\infty$. Each convolution on the left hand side of \eqref{estimcoro1} is well defined and belongs to $\ell^p$ since it corresponds to the convolution on $\Z$ of an $\ell^1$ sequence with ${\boldsymbol{u}}^0 \in \ell^p$. The conclusion of Corollary \ref{coro1} follows from two observations. First of all, given any constant $c_0>0$ and and integer $\mu_k \in \N^*$, we have:
$$
\forall \, n \in \N^* \, ,\quad \dfrac{1}{n^{1/(2\, \mu_k)}} \, \sum_{\ell \in \Z} \exp \left( -c_0 \, \left( \dfrac{|\ell-\alpha_k \, n|}{n^{1/(2\, \mu_k)}} \right)^{\frac{2\, \mu_k}{2\, \mu_k-1}} \right) \le C \, ,
$$
for some appropriate constant $C$. Second, for any constant $c_1>0$, we also have:
$$
\sum_{\ell \in \Z} \exp (-c_1 \, (n+|\ell|)) \le C \, \exp (-c_1 \, n) \, .
$$
We then combine the bounds \eqref{estim} and \eqref{estim-aux} with Young's inequality to obtain the conclusion of Corollary \ref{coro1}. We have thus obtained an accurate description of the large time asymptotics of the iterated convolution ${\boldsymbol{a}}^{\star n} \star {\boldsymbol{u}}^0$. An example is detailed in the following paragraph.

\subsection{A third order scheme for the transport equation}

We report now on several calculations that can be made for the so-called $O3$ scheme, which is a finite difference approximation of the transport equation. We refer to \cite{strang2,Despres1,Despres2} for more information about such high order compact approximations.

The $O3$ scheme corresponds to the finitely supported real valued sequence ${\boldsymbol{a}}=(a_\ell)_{\ell \in \Z}$ that is defined by:
$$
a_{-1}:=\dfrac{\lambda \, (2-\lambda) \, (\lambda-1)}{6} \, ,\quad 
a_0:=\dfrac{(2-\lambda) \, (1-\lambda^2)}{2} \, ,\quad 
a_1:=\dfrac{\lambda \, (2-\lambda) \, (1+\lambda)}{2} \, ,\quad 
a_2:=-\dfrac{\lambda \, (1-\lambda^2)}{6} \, ,
$$
where $\lambda$ is a real parameter. All other values of $a_\ell$ are zero. As reported in \cite{Despres2,CF1}, the corresponding Fourier transform $F_{\boldsymbol{a}}$ can be explicitly computed, and satisfies:
$$
\forall \, \xi \in \R \, ,\quad \left| F_{\boldsymbol{a}} \big( \, {\rm e}^{\, \mathbf{i} \, \xi} \, \big) \right|^2 \, = \,  1- \frac{4}{9} \, \lambda \, (2-\lambda) \, (1-\lambda^2) \, \sin^4\left(\frac{\xi}{2}\right) \, \left( 3+4 \, \lambda \, (1-\lambda) \, \sin^2\left(\frac{\xi}{2}\right) \right) \, .
$$
In particular, for $\lambda \in (0,1)$, Assumption \ref{ass:1} is satisfied and the second possibility in Lemma \ref{lem:comportementF} occurs. Furthermore, with the notation of Lemma \ref{lem:comportementF}, there holds $K=1$ and $\underline{\kappa}_1=F_{\boldsymbol{a}}(\underline{\kappa}_1)=1$. Assuming from now on that the parameter $\lambda$ lies in the open interval $(0,1)$,  Assumption \ref{ass:2} is also satisfied with:
$$
\alpha_1=\lambda \, ,\quad \beta_1=\dfrac{\lambda \, (2-\lambda) \, (1-\lambda^2)}{24} \, \quad \mu_1=2 \, .
$$

We now explain the calculation of the cumulants at  $\underline{\kappa}_1=1$ and apply Theorem \ref{thm1} in that case. We compute, as $\xi$ tends to zero:
\begin{align*}
{\rm e}^{-\, \mathbf{i} \, \lambda \, \xi} \, F_{\boldsymbol{a}} \big( \, {\rm e}^{\, \mathbf{i} \, \xi} \, \big) =1 & -\dfrac{\lambda \, (2-\lambda) \, (1-\lambda^2)}{24} \, \xi^4 -\dfrac{\mathbf{i} \, \lambda \, (2-\lambda) \, (1-\lambda^2) \, (1-2\, \lambda)}{60} \, \xi^5 \\
&+\dfrac{\lambda \, (2-\lambda) \, (1-\lambda^2)(1-2\, \lambda+2\, \lambda^2)}{144} \, \xi^6+\dfrac{\mathbf{i} \lambda \, (2-\lambda) \, (1-\lambda^2)\, (1-2\lambda) \,(1-\, \lambda+\, \lambda^2)}{504} \, \xi^7 \\
& +O(\xi^8) \, ,
\end{align*}
which means that the power series expansion \eqref{DLk} holds at the point $\underline{\kappa}_1=1$ with:
$$
\gamma_{1,5}=-2\, \lambda \, (2-\lambda) \, (1-\lambda^2) \, (1-2\, \lambda) \, ,\quad \gamma_{1,6}=-5 \, \lambda \, (2-\lambda) \, (1-\lambda^2) \, (1-2\, \lambda+2\, \lambda^2) \, ,
$$
and
$$
\gamma_{1,7}=-10 \, \lambda \, (2-\lambda) \, (1-\lambda^2)\, (1-2\, \lambda) \,(1-\, \lambda+\, \lambda^2).
$$
Using the general formula \eqref{valeurPkm} with $m=1,2,3$, we also compute:
$$
P_{1,1}(Y)\, =\, \frac{\gamma_{1,5}}{5 \, !}\, Y^5, \quad P_{1,2}(Y) =\, \frac{\gamma_{1,6}}{6 \, !}\, Y^6+ \frac{1}{2}\left(\frac{\gamma_{1,5}}{5 \, !}\right)^2 \, Y^{10},$$
and 
$$ P_{1,3}(Y) = \, \frac{\gamma_{1,7}}{7 \, !}\, Y^7+\left(\frac{\gamma_{1,5}}{5 \, !}\right)\left(\frac{\gamma_{1,6}}{6 \, !}\right)\, Y^{11}
+\frac{1}{6}\left(\frac{\gamma_{1,5}}{5 \, !}\right)^3\, Y^{15}.
$$

\begin{figure}[t!]
\centering
\includegraphics[width=.5\textwidth]{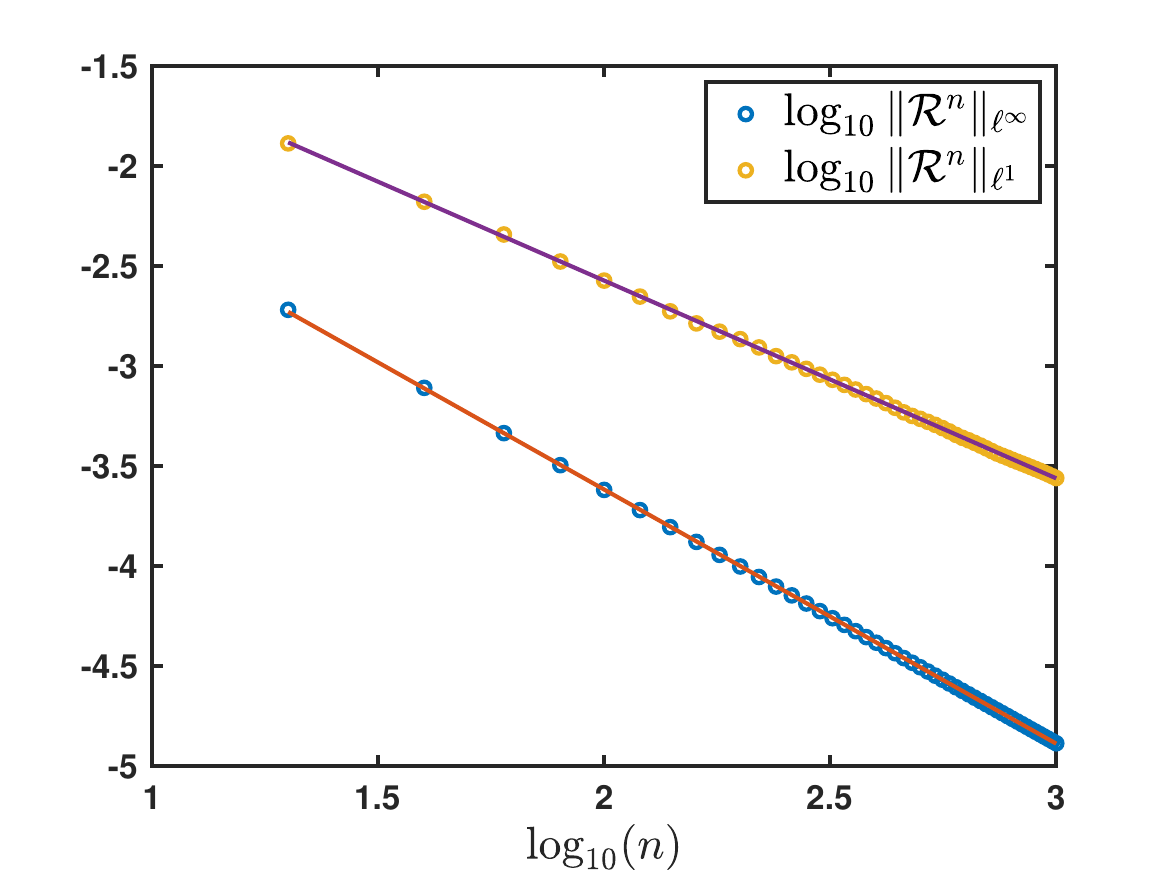}
  \caption{Illustration of the scaling factor in the generalized asymptotic expansion provided by Theorem~\ref{thm1} in the case of the $O3$ scheme. We plot $\log_{10}\|\mathcal{R}^n\|_{\ell^\infty}$ (blue circles) and $\log_{10}\|\mathcal{R}^n\|_{\ell^1}$ (orange circles) as a function of $\log_{10}(n)$ 
  together with a best linear fit for each norm for $n$ ranging from $1$ to $10^3$. For the $\ell^\infty$ norm we find a slope of $-1.2707$ while for the $\ell^1$ norm we find a slope of $-0.9887$ which compare both well with the predicted $-5/4$ and $-1$ scaling factors of Theorem~\ref{thm1}.}
  \label{fig:logplot}
\end{figure}

Next, we recall that ${\boldsymbol{a}}$ is finitely supported so the estimate \eqref{estim} holds not only for $\ell$ and $n$ in a large sector $\{ |\ell| \le L \, n \}$ but for any $(\ell,n) \in \Z \times \N^*$. Specifying from now on to $\lambda=1/2$, Theorem \ref{thm1} gives (with $M=3$ in this case):
$$
\left| {\boldsymbol{a}}^{\star n}_\ell -\dfrac{1}{n^{1/4}} \, H^{3/128}_4 \left( \dfrac{\ell-n/2}{n^{1/4}} \right) +\dfrac{1}{512 \, n^{3/4}} \, \Big( H^{3/128}_4 \Big)^{(6)} \left( \dfrac{\ell-n/2}{n^{1/4}} \right) \right| 
\le \dfrac{C}{n^{5/4}} \, \exp \left( -c \, \left( \dfrac{|\ell-n/2|}{n^{1/4}} \right)^{4/3} \right) \, .
$$
Here, we have used $\gamma_{1,5}=\gamma_{1,7}=0$ with $\gamma_{1,6}=-45/32$ so that $P_{1,1}(Y)=P_{1,3}(Y)=0$ and $P_{1,2}(Y)=-\dfrac{1}{512}Y^6$. Upon defining the sequence $\mathcal{R}^n=(\mathcal{R}^n_\ell)_{\ell\in\Z}$ from the above remainder term as
$$
\mathcal{R}^n_\ell:={\boldsymbol{a}}^{\star n}_\ell -\dfrac{1}{n^{1/4}} \, H^{3/128}_4 \left( \dfrac{\ell-n/2}{n^{1/4}} \right) +\dfrac{1}{512 \, n^{3/4}} \, \Big( H^{3/128}_4 \Big)^{(6)} \left( \dfrac{\ell-n/2}{n^{1/4}} \right)
$$ 
for any $(\ell,n) \in \Z \times \N^*$, we show in Figure~\ref{fig:logplot}  the log plot of $\|\mathcal{R}^n\|_{\ell^\infty}$ and $\|\mathcal{R}^n\|_{\ell^1}$ and recover the respective scaling $n^{-5/4}$ and $n^{-1}$. Furthermore, in Figure~\ref{fig:profiles}, we illustrate the generalized Gaussian estimate of the remainder $\mathcal{R}^n_\ell$ by showing for different time iterations that
\bqs
n^{5/4}\left|\mathcal{R}^n_\ell\right| \leq C \exp\left(- c  \, \left( \dfrac{|\ell-n/2|}{n^{1/4}} \right)^{4/3} \right),
\eqs
with constants $C=0.09$ and $c=0.225$.

Let us finally note that other examples originating from finite difference approximations of the transport equation (Lax-Friedrichs scheme and so-called $\infty$ scheme), with several tangency points ($K \ge 2$), can be found in \cite[Section 4]{CF1}, and for which the framework of Theorem~\ref{thm1} would straightforwardly apply.  

\begin{figure}[t!]
\centering
\includegraphics[width=.5\textwidth]{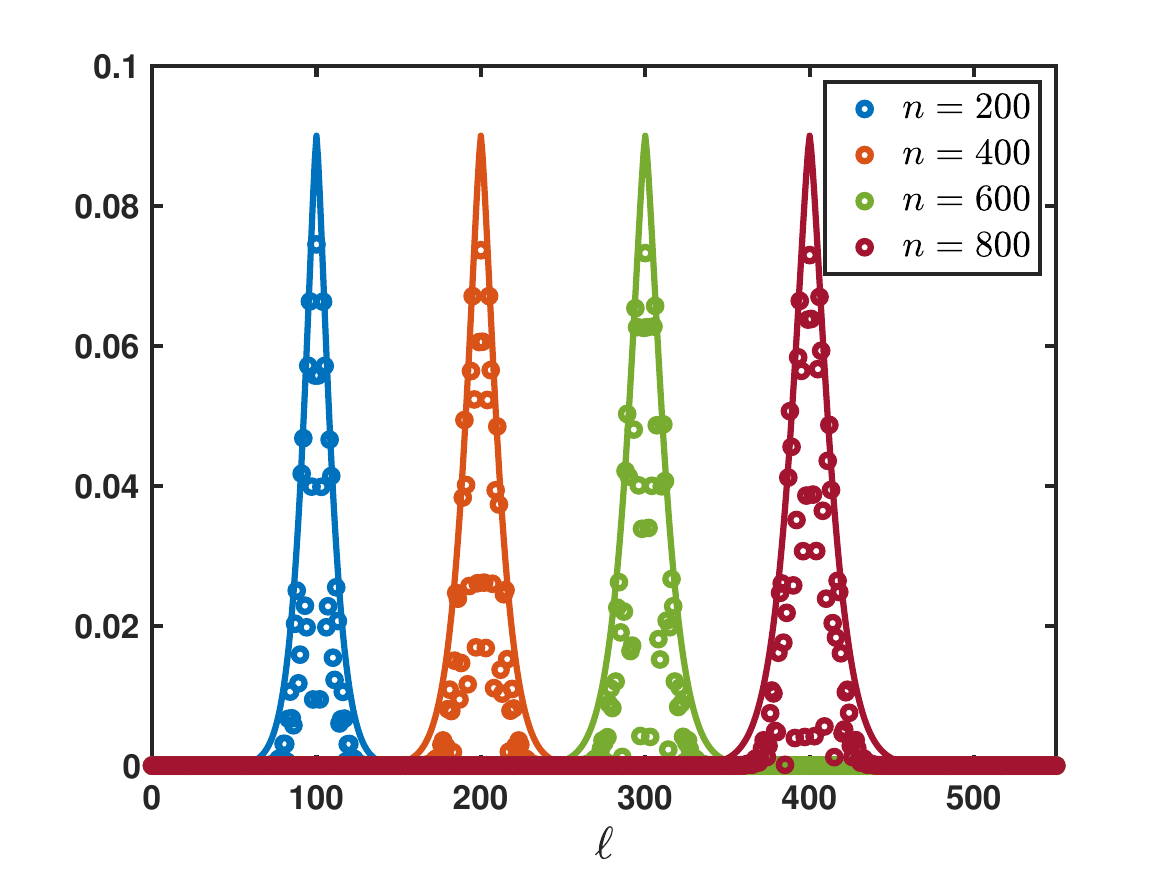}
  \caption{Illustration of the rescaled remainder term $n^{5/4}\left|\mathcal{R}^n_\ell\right|$ (colored circles) at different time iterations of the $O3$ scheme compared with a fixed generalized Gaussian profile centered at $\ell=\lambda n$ (solid lines) with $\lambda=1/2$. The fixed generalized Gaussian profile is given by the sequence $\ell \mapsto C \exp\left(- c  \, \left( \dfrac{|\ell-n/2|}{n^{1/4}} \right)^{4/3} \right)$ with constants $C=0.09$ and $c=0.225$.}
  \label{fig:profiles}
\end{figure}

\bibliographystyle{alpha}
\bibliography{CF}
\end{document}